\theoremstyle{remark}
\newtheorem*{remarks}{Remark}
\date{}
\author{Sovanlal Mondal\\
\vspace{0.5 cm}
  \normalsize Department of Mathematical Sciences,
  University of Memphis\\
  Memphis, TN 38152, USA \\
  email \href{mailto:smondal@memphis.edu}{smondal@memphis.edu}} \title{Grid method for divergence of
  averages}
\begin{document}
\maketitle
\begin{abstract} 
In this paper, we will introduce the `grid method' to prove that the
  extreme case of oscillation occurs for the averages
  obtained by sampling a flow along the sequence of times of the form
  $\{n^\alpha: n\in \setN\}$, where $\alpha$ is a \emph{positive
    non-integer rational number}. Such behavior of a sequence is known as the `strong sweeping out' property. By using the same method, we will give an example of a general class of sequences which satisfy the  \emph{strong sweeping out} property. This class of sequences may be useful to solve the longstanding open problem: for a given \emph{irrational} $\alpha$, whether the sequence $(n^\alpha)$ is \emph{bad} for pointwise ergodic theorem in $L^2$ or not. In the process of proving this
  result, we will also prove a continuous version of the Conze principle.
\end{abstract}

Key words: aperiodic flow, universally bad sequences, strong sweeping out property, pointwise ergodic theorem, Conze's principle.

2020 Mathematics Subject Classification:  37A10 (Primary) 37A30 (Secondary).

\tableofcontents
\section{Introduction and main results}
Let $(T^t)_{t\in \setR}$ be a measure preserving flow on a Lebesgue probability space
$(X,\Sigma, \mu)$. Birkhoff's pointwise ergodic theorem asserts that for any function $f\in L^1(X)$, the Cesàro averages $\displaystyle\frac{1}{N}\sum_{n\in [N]}f(T^{n}x)$ converge for almost every $x$, where $[N]=\{1,2,\dots ,N\}$. This classical result motivated others to study the ergodic averages along a sequence of positive real numbers $(s_n)$ i.e. $\displaystyle\frac{1}{N}\sum_{n\in [N]}f(T^{s_n}x)$.
\begin{defn}[label={defn:1}]{}{} Let $1\leq
p\leq \infty$. A sequence $(s_n)$ of positive real numbers is said to be
 \emph{pointwise good} for $L^p$ if for 
every system
$(X,\Sigma, \mu, T^t)$ and every $f\in L^p(X)$,
$\displaystyle\lim_{N\to \infty}\frac{1}{N}\sum_{n\in [N]}f(T^{s_n}x)$ exists for almost every $x\in X.$\\
\end{defn} 
\begin{defn}{}{}
A sequence $(s_n)$ of positive real numbers is said to be
\emph{pointwise bad} in $L^p$ if for every aperiodic system
$(X,\Sigma, \mu, T^t)$, there is an element $f\in L^p(X)$ such that
$\displaystyle\lim_{N\to \infty} \dfrac1{N}\displaystyle\sum_{n\in
[N]}f(T^{s_n}x)$ does not exist for almost every $x\in X.$
\end{defn}
The behavior of ergodic averages along a subsequence has a rich history. First breakthrough result in this direction was due to Krengel \autocite{Krengel} who showed that there exists a sequence of positive integers which is {pointwise bad} for $L^\infty$. A few years later, Bellow \autocite{Bellow1} proved that any lacunary sequence, for example $(s_n)=(2^n)$, is {pointwise bad} for $L^p$ when $p\in [1,\infty)$. On the other extreme, if a sequence grows slower than any positive power of $n$, for example $(s_n)=\big((\log n)^c\big)$, $c>0$, then it is also {pointwise bad} for $L^\infty$ \autocite[Theorem 2.16]{JW}. In \autocite{Bellow2} and \autocite{Karin}, it was proved that whether a sequence will be  {pointwise good} for $L^p$ or not also depends on the value of $p$. More precisely, they showed that for any given $1\leq p<q\leq \infty$, there are sequences $(s_n)$ which are {pointwise good} for $L^q$ but {pointwise bad} for $L^p$ (see also \cite{Andrew} for finer results in terms of Orlicz spaces). There are many instances where the behavior of the averages cannot be determined by either the growth rate of the sequence $(s_n)$ or the value of $p$; instead one has to analyze the intrinsic arithmetic properties of the sequence $(s_n)$. One such curious example is $(n^\alpha)$.
A celebrated result of Bourgain \autocite[Theorem 2]{BO} says that the sequence $(n^\alpha)$ is {pointwise good} for $L^2$ when $\alpha$ is a {positive integer}, on the other hand $\lfloor (n^\alpha+\log n)\rfloor$ is known to be {pointwise bad} for $L^2$ when $\alpha$ is a \emph{positive integer} \autocite[Theorem C]{BKQW}. Later, in a series of papers, it was established that for any polynomial $P(x)$, the sequence $P(n)$, and the sequence of primes are {pointwise good} for $L^p$, when $p>1$ \autocite{BO3},\autocite{Wierdl}. However, they are {pointwise bad} for $L^1$ \autocite{LaVictoire2}, \autocite{BuczolichMauldin}. Thus, the $L^1$ case turned out to be more subtle than the others. It was largely believed that there cannot be any sequence $(s_n)$ which is  {pointwise good} for $L^1$ and satisfies $(s_{n+1}-s_n) \to \infty$ as $n \to \infty$. Buczolich \autocite{Buczolich} constructed inductively a sophisticated example to disprove this conjecture. Later, it was shown in \autocite{UrbanZien} that  $\lfloor n^c\rfloor, c\in(1,1.001)$ is {pointwise good} for $L^1$. The current best result is due to Mirek who showed that $\lfloor n^c\rfloor, c\in(1,\frac{30}{29})$ is {pointwise good} for $L^1$ \autocite{Mirek}, see also \autocite{Trojan}. It would be interesting to know if the above result can be extended to all \emph{positive non integer }$c$. For further exposition in this area, the reader is referred to the survey article of \autocite{RW}.\\

 \ \ \ \ In this paper we will be concerned with the behavior of ergodic averages along $(n^\alpha)$ where $\alpha$ is a \emph{positive non-integer rational number}. In a strong contrast with \autocite[Theorem 2]{BO}, Bergelson, Boshernitzan and Bourgain proved in \autocite{BBB} that
for any \emph{ non integer rational $\alpha$}, $(n^\alpha)$ is { pointwise bad } for $L^\infty$. Their proof is based on Bourgain's entropy method. Later, the result was
improved and the proof was simplified in \autocite[Example 2.8]{JW}: it was proved
that if the averages are taken along $(n^\frac{a}{b})$, where
$a,b\in \setN$ and $b\geq 2$, then for any given $\epsilon>0$, there
exists a set $E\in \Sigma$ such that $\mu(E)<\epsilon$ and for almost every
$x$,
$\displaystyle\limsup_{N\to \infty}\dfrac1{N}\displaystyle\sum_{n\in
  [N]}\setone_E(T^{s_n}x)\geq \delta$. The constant $\delta$
depends on $b$ and is explicitly given by $\frac{1}{\zeta(b)}$, where
$\zeta(.) $ is the Riemann zeta function. For all $b\geq 2$, $\delta$ lies
in $ (0,1).$ In this paper, we
will prove a general result, Proposition \ref{prop:1}, as a consequence of which
we will prove the following theorems:
 \begin{thm}[label={thm:5}]{}{} Let $\alpha$ be a fixed 
non-integer rational number. Then for every aperiodic dynamical
system $(X,\Sigma, \mu,T^t)$ and every $\epsilon>0$, there exists a
set $E\in \Sigma$ such that $\mu(E)<\epsilon$, and
$\displaystyle\limsup_{N\to \infty}\dfrac1{N}\displaystyle\sum_{n\in
[N]}\setone_E(T^{n^\alpha}x)=1$ a.e. and
$\displaystyle\liminf_{N\to \infty}\dfrac1{N}\displaystyle\sum_{n\in
[N]}\setone_E(T^{n^\alpha}x)=0$ a.e.
\end{thm}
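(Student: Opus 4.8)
The plan is to deduce the theorem from \cref{prop:1}. By the continuous Conze principle, it is enough to produce, for each aperiodic flow and each $\epsilon>0$, a single ``flow-box witness'': a set $E$ of measure $<\epsilon$ sitting inside a long Rokhlin tower for which a $(1-\epsilon)$-proportion of the sampled points $\{T_{n^\alpha}x:n\le N\}$ falls into $E$ for a positive-measure set of $x$ (and, with $E$ replaced by a set of measure $>1-\epsilon$, the dual statement for the complement). \cref{prop:1} packages this reduction together with the standard tower-stacking argument that boosts such a witness to $\limsup_N A_N\mathbb{1}_E=1$ and $\liminf_N A_N\mathbb{1}_E=0$ a.e. So the only thing left is to check that $s_n=n^\alpha$ supplies the combinatorial input that \cref{prop:1} asks for.

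To produce that input, write $\alpha=a/b$ with $\gcd(a,b)=1$, $b\ge 2$, and for $n\le N:=M^b$ decompose $n=k^b+j$ with $k=\lfloor n^{1/b}\rfloor$ and $0\le j<(k+1)^b-k^b$. The binomial expansion gives
$n^{a/b}=k^a+\tfrac{a}{b}\,k^{a-b}j+O\!\big(j^{2}k^{a-2b}\big)$,
and since $k^a\in\setZ$ this displays the times of the $k$-th block on an arithmetic progression of step $\tfrac{a}{b}k^{a-b}$, modulo an integer and a controlled error. The rationality of $\alpha$ is what makes these block-progressions assemble into a genuine two-dimensional grid — one direction indexed by the block $k$ (after $k$ is restricted to a convenient residue class $\bmod\,b$), the other by $j$ — rather than a featureless equidistributed smear; this grid, together with the bookkeeping of how many $n\le N$ land on each node, is exactly the data fed to \cref{prop:1}, and it is where the argument would break down for generic, and in particular irrational, exponents.

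The main obstacle is the size of the remainder $O(j^{2}k^{a-2b})$ against the step $\tfrac{a}{b}k^{a-b}$: over a full block the remainder is of order $k^{a-2}$, which for $b\ge 2$ is \emph{not} smaller than the step, of order $k^{a-b}$, so the one-progression-per-block picture is too coarse and the grid it yields is not sparse enough to keep $E$ small. The remedy is a multiscale construction: cut each block into sub-blocks of length $\ll k^{b/2}$, on which the quadratic and higher terms are genuinely negligible against the step, run the grid construction separately on the geometrically many scales thus produced, and take the scales finely enough that the step is essentially constant on each. The delicate part — the technical heart — is the balancing: trade off the number of scales against the measures of the per-scale target sets so that their union still has measure $<\epsilon$ while together they account for all but an $\epsilon$-fraction of $[N]$, and verify that the resulting configuration is exactly of the form \cref{prop:1} requires. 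The $\liminf_N A_N\mathbb{1}_E=0$ half then follows by running the same construction on the complement of $E$.
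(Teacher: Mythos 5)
Your reduction to \cref{prop:1} misstates what that proposition asks for, and as a result the substantive part of the proof is missing. \cref{prop:1} does not take as input a ``flow-box witness'' or a grid of approximate arithmetic progressions; its hypotheses are purely arithmetic: the sequence $S=(n^\alpha)$ must be partitioned into subsequences $S_k$ such that (a) the relative upper density of $\bigcup_{k\le K}S_k$ tends to $1$ as $K\to\infty$, and (b) each $S_k$ is linearly independent over $\setQ$. Nothing in your block decomposition $n=k^b+j$ with the expansion $n^{a/b}=k^a+\frac{a}{b}k^{a-b}j+O(j^2k^{a-2b})$ addresses either condition, and approximate progressions with error terms cannot be ``fed'' to the proposition: its proof runs through Kronecker's theorem on a high-dimensional torus and therefore needs exact $\setQ$-linear independence, not near-progression structure. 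Moreover the step you yourself identify as the technical heart (the multiscale balancing of scales against measures) is only announced, never carried out, so even as a standalone argument this is a plan rather than a proof; and the closing remark that the $\liminf$ half ``follows by running the same construction on the complement'' is unnecessary, since once strong sweeping out is known, the simultaneous $\limsup=1$ and $\liminf=0$ statement is the cited result in part (b) of the Remark, not something the construction must reproduce.

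What is actually needed at this point, and what the paper does, is elementary and quite different: after discarding the integer elements of $S$ (a set of relative density zero), one partitions $S$ as $S_k=k^aS_1$, where $S_1=\{n^{a/b}: n \text{ is } b\text{-free}\}$, using the unique factorization $n=j^b\tilde n$ with $\tilde n$ $b$-free. The density condition (a) follows from the count $\#S_k(N)/\#S(N)\le k^{-b}$ together with $\sum_{k>K}k^{-b}\to 0$, and the independence condition (b) follows from \cref{lem:2}: since $\gcd(a,b)=1$ and the prime exponents of a $b$-free integer are $<b$, the exponent vectors of elements of $S_1$ are nonzero and pairwise distinct mod $b$, so $S_1$ (hence each $k^aS_1$) is linearly independent over $\setQ$ by Besicovitch's theorem. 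This is also where the rationality of $\alpha$ genuinely enters --- through the denominator $b$, the $b$-free factorization, and Besicovitch --- not through any alignment of block progressions, so the heuristic you offer for why the argument fails for irrational exponents does not reflect the actual mechanism. To repair the write-up, replace the binomial/multiscale sketch by a verification of (a) and (b) for an explicit partition of $(n^{a/b})$.
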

This result tells us that an extreme case of oscillation occurs for the averages along the sequence $(n^\alpha)$, when
\emph{$\alpha$ is a non-integer rational}.\\
It is interesting to compare this result with \autocite[Theorem B]{BKQW} which says that $\lfloor{n^\frac{3}{2}\rfloor}$ is {pointwise good} for $L^2$. In the same paper, one can find various interesting results about the behavior of averages along sequences which are modeled on functions from the Hardy field.

In our next theorem, we will give an example of a more general class of
sequences which exhibit similar behavior.
 \begin{thm}[label={thm:4}]{}{}Fix a positive integer $l$. Let $\alpha_i=\frac{a_i}{b_i}$, for
$i\in[l]$, be non-integer rational numbers with
gcd$(b_i,b_j)=1$ for $i\neq j$. Let $S=(s_n)$ be the sequence obtained
by rearranging the elements of the set
$\{n_1^{\alpha_1}n_2^{\alpha_2}\dots n_l^{\alpha_l} :n_i\in\setN
\text{ for all } i\in [l]\}$ in an increasing order. Then for every aperiodic dynamical
system $(X,\Sigma, \mu,T^t)$ and every $\epsilon>0$, there exists a
set $E\in \Sigma$ such that $\mu(E)<\epsilon$ and
$\displaystyle\limsup_{N\to \infty}\dfrac1{N}\displaystyle\sum_{n\in
[N]}\setone_E(T^{s_n}x)=1$ a.e. and
$\displaystyle\liminf_{N\to \infty}\dfrac1{N}\displaystyle\sum_{n\in
[N]}\setone_E(T^{s_n}x)=0$ a.e.
 \end{thm}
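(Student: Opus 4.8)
The plan is to obtain \cref{thm:4} from \cref{prop:1} together with the continuous Conze principle, exactly as \cref{thm:5} is obtained — the latter being the case $l=1$ — so that the whole task is to check that the multi-parameter sequence $S=(s_n)$ carries the arithmetic structure the grid method consumes. Write $\alpha_i=a_i/b_i$ in lowest terms; note first that the hypotheses already force the $\alpha_i$ to be pairwise distinct (if $a_i/b_i=a_j/b_j$ with both in lowest terms then $b_i=b_j$, whence $\gcd(b_i,b_j)=b_i=1$, contradicting non-integrality). The crucial point I would establish is the integrality dichotomy: by an elementary valuation / Chinese-remainder argument based on $\gcd(b_i,b_j)=1$, one shows that $n_1^{\alpha_1}\cdots n_l^{\alpha_l}\in\mathbb{Q}$ if and only if every $n_i$ is a perfect $b_i$-th power, $n_i=m_i^{b_i}$, in which case the value is the positive integer $m_1^{a_1}\cdots m_l^{a_l}$. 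Two consequences feed the grid: $S$ contains the multiplicative semigroup $G=\{m_1^{a_1}\cdots m_l^{a_l}:m_i\in\setN\}\subseteq\setN$, and $S$ is dilation-invariant under every $q\in G$ (send $n_i\mapsto m_i^{b_i}n_i$), so $qS\subseteq S$; since $G\supseteq\{2^{a_1k}:k\in\setN\}$, this gives an infinite, unbounded family of integer scales of exact self-similarity — precisely what the grid is built around in the proof of \cref{thm:5}.

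Second, I would supply the counting and distribution input required by \cref{prop:1}. Because the $\alpha_i$ are distinct, a Dirichlet-hyperbola count gives a clean asymptotic
\[
\#\{n:s_n\le X\}=\#\bigl\{(n_1,\dots,n_l):n_1^{\alpha_1}\cdots n_l^{\alpha_l}\le X\bigr\}\;\asymp\;X^{1/\min_i\alpha_i}
\]
with no logarithmic correction, and with an error term one controls by peeling off one coordinate at a time. Alongside this one needs the uniform local-distribution estimate for the sample times — the several-variable analogue of the van der Corput / Weyl bounds used for $n^{a/b}$ — and the coprimality is again the hinge: it lets the $l$-dimensional Weyl sums be handled one coordinate at a time, so the single-variable estimates behind \cref{thm:5} bootstrap to the product sequence.

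Third, with these facts in hand the grid construction runs verbatim: the continuous Conze principle replaces the given aperiodic system $(X,\Sigma,\mu,T_t)$ by a fixed convenient aperiodic model (a suspension flow over an ergodic base); on tall Rokhlin columns of that model one lays down a grid of thin intervals whose spacing is tuned to a large scale $q\in G$; because $qS\subseteq S$ and the counting function is regular, a proportion tending to $1$ of the sample times $s_n$, for $n$ running through a suitable subsequence of values tending to $\infty$, falls into the grid, which yields $\displaystyle\limsup_{N\to\infty}\frac1N\sum_{n\in[N]}\setone_E(T_{s_n}x)=1$ a.e.; pushing the scales $q$ and the column heights to infinity keeps $\mu(E)<\epsilon$, while the complementary ranges of $n$, where the orbit has already escaped the grid, force $\displaystyle\liminf_{N\to\infty}\frac1N\sum_{n\in[N]}\setone_E(T_{s_n}x)=0$ a.e. The passage to a.e.\ statements is the Banach-principle content already packaged inside \cref{prop:1}.

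The step I expect to be the main obstacle is the second one, carried out uniformly in several variables: producing a counting asymptotic for $\{n_1^{\alpha_1}\cdots n_l^{\alpha_l}\le X\}$ regular enough — controlled error, no stray logarithmic factors — to meet the hypotheses of \cref{prop:1}, and in tandem a genuinely uniform local-distribution estimate for the $s_n$. The assumption $\gcd(b_i,b_j)=1$ is doing essential work throughout: it makes the integrality dichotomy clean (so the grid scales $G$ are actual integers), and it decouples the coordinates in both the hyperbola count and the Weyl sums, so the one-variable analysis underpinning \cref{thm:5} transfers. Drop coprimality and the ramifications of the $\mathbb{Q}(n_i^{1/b_i})$ interact, $G$ need no longer be integral, and the grid loses the arithmetic rigidity it depends on.
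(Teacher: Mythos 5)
There is a genuine gap: you never verify (or even state) the two hypotheses that \cref{prop:1} actually consumes, and the inputs you propose in their place are not what the grid method uses. \Cref{prop:1} asks for a decomposition $S=\cdot\hspace{-7pt}\bigcup_{k}S_k$ such that (a) the relative upper density of $R_K=\bigcup_{k\le K}S_k$ tends to $1$, and (b) each $S_k$ is linearly independent over $\setQ$; the grid on $\setT^K$ is then driven by Kronecker's theorem applied to these linearly independent pieces. Your proposal contains no such decomposition and no linear-independence statement at all. Instead you substitute a dilation invariance $qS\subseteq S$ for integer scales $q$, a hyperbola-count asymptotic $\asymp X^{1/\min_i\alpha_i}$, and ``uniform local-distribution / Weyl--van der Corput estimates'' — none of which are hypotheses of \cref{prop:1}, none of which are used in the proof of \cref{thm:5} either (there are no exponential-sum estimates anywhere in that argument), and which do not by themselves produce the counterexample to the maximal inequality that \cref{thm:conze} requires. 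The third step (Rokhlin columns of a suspension flow with a grid of thin intervals tuned to a scale $q\in G$) is a sketch of a different construction, not the paper's grid of cubes in $\setT^K$, and as written it is too vague to certify either the $\limsup=1$ or the $\liminf=0$ conclusion; in the paper the oscillation statement comes from strong sweeping out plus the cited result of Jones--Reinhold (remark (b)), not from a separate ``escape'' argument.

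What the proof actually needs, and where the coprimality $\gcd(b_i,b_j)=1$ genuinely enters, is the following: set $S_1=\{n_1^{\alpha_1}\cdots n_l^{\alpha_l}: n_i \text{ is } b_i\text{-free}\}$ and $S_k=e_kS_1$, where $(e_k)$ enumerates $\{\prod_i n_i^{b_i}\}$; prove that every element of $S_1$ has a unique representation and that the $S_k$ are pairwise disjoint (a $p$-adic valuation argument in which $c_ia_i\overline{b_i}\equiv 0 \pmod{b_i}$ forces $c_i=0$ precisely because the $b_i$ are pairwise coprime); prove the density condition by the counting bound $\#T_i^L(N)/\#S(N)\le L^{-b_i}$ summed over $L$ and $i$; and prove linear independence of $S_1$ over $\setQ$ by checking it is a \emph{good} set in the sense of \cref{lem:2} with $m=b_1b_2\cdots b_l$, again using coprimality to show each exponent $v_{j,p}=\sum_i c_ia_i\overline{b_i}$ is nonzero mod $m$ and that distinct elements have distinct exponent vectors mod $m$. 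Your ``integrality dichotomy'' is true and is essentially the uniqueness-of-representation computation, but you deploy it only to manufacture integer dilation scales, which play no role; without the decomposition, the density lemmas, and the Besicovitch-type linear-independence verification, the appeal to \cref{prop:1} has nothing to act on.
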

Theorem \ref{thm:5}
is a special case of Theorem \ref{thm:4}. The sequences considered in Theorem \ref{thm:4} indeed form a much larger class than the ones considered in
Theorem \ref{thm:5}. For example, Theorem \ref{thm:5} does not apply to the sequence
$(m^\frac{1}{3}n^\frac{2}{5}:m,n\in \setN)$, but
Theorem \ref{thm:4} does.\\
In Theorem \ref{thm:conze1} we will prove the Conze principle for flows.
\section{Preliminaries}
\vspace{.3cm}
Let $(X,\Sigma, \mu)$ be a
Lebesgue probability space. That means $\mu$ is a countably additive
complete positive measure on $\Sigma$ with $\mu(X)=1$, with the
further property that the measure space is measure theoretically
isomorphic to a measurable subset of the unit interval with Lebesgue
measure.  By a flow $\{T^t:t \in \setR\}$ we mean a group of
measurable transformations $T^t : X\rightarrow X$ with $T^0(x) = x$,
$T^{t+s} = T^t\circ T^s$, $s,t \in \setR$. The flow is called
measurable if the map $(x,t) \rightarrow T^t(x)$ from $X\times \setR$
into $X$ is measurable with respect to the completion of the product
of $\mu$, the measure on $X$, and the Lebesgue measure on $\setR$. The
flow will be called \emph{measure preserving} if it is measurable and each
$T^t$ satisfies  $\mu ({T^t}^{-1}A)=\mu(A)$ for all
$A\in \Sigma$. The
quadruple $(X,\Sigma,\mu, T^t)$ will be called a dynamical system.
A measurable flow will be called \emph{aperiodic} (free) if
$\mu\{x|T^t(x)=T^s(x) \text{ for some } t\neq s\}=0.$ If
the flow is aperiodic, then the quadruple will be called an aperiodic
system.\\
We will use the following notation to denote the averages.
\begin{equation*}
  \setA_{n\in [N]}f (T^{s_n}x)=\frac{1}{N}\displaystyle\sum_{n\in [N]}f(T^{s_n}x). 
\end{equation*}
More generally, for a subset $J\subset \setN$,
\begin{equation*}
  \setA_{n\in J}f (T^{s_n}x):=\frac{1}{\# J}\displaystyle\sum_{n\in J}f(T^{s_n}x). 
\end{equation*}

Before we go to the proof of our main results, let us give the following definitions.
\begin{defn}[label={defn:2}]{}{} Let $0<\delta\leq 1.$ We say that a sequence $(s_n)$ is
\emph{$\delta$-sweeping out} if in every aperiodic system $(X,\Sigma,
\mu, T^t)$, for a given $\epsilon>0$, there is a set $E\in \Sigma$
with $\mu(E)<\epsilon$ such that $\displaystyle\limsup_{N\to \infty}
\setA_{n\in [N]}\setone_E(T^{s_n}x)\geq
\delta$ for almost every $x\in X$. If $\delta=1$, then $(s_n)$ is
said to be \emph{strong sweeping out}.
\end{defn}
\begin{itemize}
    \item The \emph{relative upper density} of a subsequence $B=(b_n)$ in
$A=(a_n)$ is defined by \\ $\overline{{d}}_A(B):= \displaystyle
\limsup_{N\to \infty }\dfrac{\#\{a_n\in B:n\in[N]\}}{N}$.\\
\item Similarly,
the \emph{relative lower density} of a subsequence $B=(b_n)$ in
$A=(a_n)$ is defined by \\ $\underline{{d}}_A(B):= \displaystyle
\liminf_{N\to \infty }\dfrac{\#\{a_n\in B:n\in[N]\}}{N}$.\\
\end{itemize}
  If both
the limits are equal, we just say \emph{relative density} and denote
it by $d_A(B).$

\begin{remarks}
  \begin{enumerate}[(a)]

  \item If a sequence $(s_n)$ is $\delta$-sweeping out, then by
Fatou's lemma, it is {pointwise bad} for $L^p$,
$p\in [1,\infty].$ If a sequence $(s_n)$ contains a subsequence
$(b_n)$ of relative density $\delta>0$ which is {strong sweeping out}, then $(s_n)$ is $\delta$-sweeping out, and hence {pointwise bad} for $L^p$,
$p\in [1,\infty].$
\item If $(s_n)$ is {strong sweeping out}, then by
  \autocite[Theorem 1.3]{JR}, there exists a residual subset $\Sigma_1$ of $\Sigma$ in the symmetric pseudo-metric with the property that for all $E\in \Sigma_1$,
    \begin{equation}
      \label{eq:rem} \displaystyle\limsup_{N\to \infty}
\setA_{n\in [N]}\setone_E(T^{s_n}x)=1 \text{ a.e. 
and } \displaystyle\liminf_{N\to \infty}
\setA_{n\in [N]}\setone_E(T^{s_n}x)= 0 \text{ a.e.}
    \end{equation}
This, in particular, implies that for every $\epsilon>0$, there exists a set $E\in \Sigma$ such that $\mu(E)<\epsilon$ and $E$ satisfies \eqref{eq:rem}.
  \end{enumerate}
\end{remarks}

\section{Proof of the main results}
\vspace{.3cm}
The proof of Theorem \ref{thm:5}
and Theorem \ref{thm:4} has two main ideas, one is what we call here the grid method, and which has its origins in \cite{JO}, the other one is partitioning a given sequence of real numbers into linearly independent pieces.
Our plan is to prove the theorems as an application of Proposition \ref{prop:1}.  One of the main
ingredients for the proof of Proposition \ref{prop:1} is the following theorem:
\begin{thm}[label={thm:conze}]{}{} Let $(s_n)$ be a sequence of
positive real numbers. Suppose that for any given $\epsilon\in (0,1)$,
$P_0>0$ and a finite constant $C$, there exist $P>P_0$ and a dynamical
system $(\tilde{X},\beta, \m, U^t)$ with a set $\tilde{E}\in \beta$ such that
  \begin{equation}
    \label{eq:denial} \m\Big(x| \max_{P_0\leq N\leq P}
\setA_{n\in [N]}\setone_{\tilde{E}}(U^{s_{n}}x)\geq 1-\epsilon\Big)>
C\m(\tilde{E}).
  \end{equation} Then $(s_n)$ is {strong sweeping out}.
\end{thm}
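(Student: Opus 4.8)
The plan is to establish \cref{thm:conze} as a ``transfer + amplification'' argument in the style of the Conze principle and the Akcoglu--del Junco--Bellow--Jones--Rosenblatt technology for sweeping out. The hypothesis \eqref{eq:denial} says that, no matter how small a set we are willing to tolerate, we can find \emph{some} auxiliary system on which a window of the averages $\frac1N\sum_{n\in[N]}\setone_{\tilde E}(U_{s_n}x)$ gets within $\epsilon$ of $1$ on a set whose measure beats $C\,\m(\tilde E)$ — i.e. the maximal operator over a finite window has arbitrarily bad ``weak-type'' behaviour relative to the size of $\tilde E$. The goal is to upgrade this to a single set $E$ in a \emph{fixed} aperiodic system with $\mu(E)<\epsilon$ on which $\limsup_N \frac1N\sum_{n\in[N]}\setone_E(T_{s_n}x)=1$ a.e.

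First I would reduce to producing, for each fixed $\eta>0$ and each fixed aperiodic $(X,\Sigma,\mu,T_t)$, a set $E$ with $\mu(E)<\eta$ such that $\limsup_N \frac1N\sum_{n\in[N]}\setone_E(T_{s_n}x)\geq 1-\eta$ on a set of measure at least $1-\eta$: a routine exhaustion/diagonalization over a sequence $\eta_k\to 0$, taking disjointified pieces or intersecting the corresponding good sets, then yields $\limsup=1$ a.e. on sets of arbitrarily small measure. Second, I would invoke the \emph{continuous Conze principle} (\cref{thm:conze1}, proved later in the paper, which the excerpt permits me to assume) to move from the abstract auxiliary systems $(\tilde X,\beta,\m,U_t)$ appearing in \eqref{eq:denial} to the fixed system $(X,\Sigma,\mu,T_t)$: because the flow is aperiodic, Rokhlin-tower / Alpern-type lemmas let us copy a finite-window estimate from \emph{any} aperiodic system onto \emph{any} other one, at the cost of an arbitrarily small error in measure, provided the window $[P_0,P]$ is finite — which is exactly why \eqref{eq:denial} is stated with a finite $\max_{P_0\le N\le P}$ rather than a $\limsup$. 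So for each choice of parameters I obtain a set $E_1\subseteq X$ with $\mu(E_1)$ as small as I like (by choosing $\tilde E$ small — note $\m(\tilde E)$ can be forced small since we only used \eqref{eq:denial} with a large constant $C$ and may first shrink $\tilde E$) and with $\mu\big(\{x:\max_{P_0\le N\le P}\frac1N\sum_{n\in[N]}\setone_{E_1}(T_{s_n}x)\ge 1-\epsilon\}\big)>C\,\mu(E_1)-(\text{small})$.

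Third comes the amplification, which is the heart of the matter: a single window only gives a \emph{large} measure where the average is near $1$, not a near-full-measure set, and it only controls $\limsup$ over $[P_0,P]$, not $\limsup_{N\to\infty}$. Here I would iterate. Using the hypothesis with $C=C_k\to\infty$ and $\epsilon=\epsilon_k\to 0$, I get sets $E^{(k)}$ with $\mu(E^{(k)})\to 0$ so fast that $\sum_k \mu(E^{(k)})<\infty$, each with a finite window $[P_0^{(k)},P^{(k)}]$ on which the average exceeds $1-\epsilon_k$ on a set $G_k$ of measure $>C_k\mu(E^{(k)})$. By choosing the windows successively disjoint ($P_0^{(k+1)}>P^{(k)}$) and by applying $T$-invariance / Borel--Cantelli-type reasoning together with a mean-ergodic averaging over translates of the $G_k$ (the ``spreading'' step: replace $G_k$ by $\bigcup_{j}T_{t_j}^{-1}G_k$ for suitable $t_j$ so that the large-measure set becomes a full-measure set where the average is large \emph{infinitely often}), I can force the event $\{\max_{P_0^{(k)}\le N\le P^{(k)}}\frac1N\sum\setone_{E^{(k)}}(T_{s_n}x)\ge 1-\epsilon_k\}$ to have measure $\to 1$. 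Setting $E=\bigcup_k E^{(k)}$ (or a suitable disjointified/limiting version) and using $\setone_E\ge\setone_{E^{(k)}}$, monotonicity gives $\limsup_{N\to\infty}\frac1N\sum_{n\in[N]}\setone_E(T_{s_n}x)\ge 1-\epsilon_k$ for every $k$ on a set of full measure, hence the $\limsup$ equals $1$ a.e.; and $\mu(E)\le\sum_k\mu(E^{(k)})<\epsilon$. The $\liminf=0$ conclusion is then automatic from Remark (b) (the result of \autocite{JR}), or can be arranged directly by the same disjointification since the averages of $\setone_E$ along $(s_n)$ also return near $0$ infinitely often.

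The main obstacle I anticipate is the amplification/spreading step — converting ``large measure'' into ``full measure'' and ``one finite window'' into ``infinitely many scales $N\to\infty$'' simultaneously. The delicate point is that spreading $G_k$ by translates $T_{t_j}$ enlarges the window of times $s_n$ that are relevant (shifting a time-$s_n$ sample by $t_j$ changes which averaging index $N$ witnesses the near-$1$ value), so one must check that the translation parameters $t_j$ can be chosen, using aperiodicity and a Rokhlin tower tall enough to accommodate all of $s_1,\dots,s_{P^{(k)}}$ plus the spread, without the supports of the spread copies of $E^{(k)}$ overlapping so much that $\mu(E)$ blows up. This is exactly the kind of bookkeeping the Conze principle is designed to handle, and the finiteness of each window $[P_0^{(k)},P^{(k)}]$ in \eqref{eq:denial} is what makes the tower heights finite and the argument go through; getting the quantifiers in the right order ($\epsilon$ first, then $P_0$ and $C$, then extract $P$ and the system) will require care.
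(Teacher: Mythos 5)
Your overall strategy (a Calder\'on-type transfer plus an ABJLRW-style amplification) lives in the right circle of ideas, but it is not how the paper argues, and as written two of your key steps have genuine gaps. First, \cref{thm:conze1} cannot do the job you assign to it. That theorem transfers a maximal inequality that \emph{holds} in one aperiodic flow (for the sup over \emph{all} $N$, for $L^1$ functions, for every level $\gamma$) to all systems; its contrapositive only tells you that \emph{some} weak-type inequality fails for \emph{some} $f$ and $\gamma$, not that the specific finite-window, characteristic-function, level-$(1-\epsilon)$ denial \eqref{eq:denial} survives the passage into your fixed target system (note also that the auxiliary system in \eqref{eq:denial} need not be aperiodic). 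The tool that actually moves the denial is the contrapositive of \cref{prop:Calderon1}: since the hypothesis holds for \emph{every} $C$, the upper-density maximal inequality on $\setR$ must fail for every constant, i.e.\ for all $\epsilon$, $P_0$, $C$ there exist $P$ and $E\subset\setR$ with $\overline{d}\big(t:\max_{P_0\le N\le P}\frac1N\sum_{n\in[N]}\setone_E(s_n+t)\ge 1-\epsilon\big)>C\,\overline{d}(E)$. The paper then simply quotes \autocite[theorem 2.2]{ABJLRW}, which is exactly the criterion converting this denial into strong sweeping out; the entire ``amplification'' you attempt by hand is outsourced to that citation.

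Second, the amplification sketch itself has a real gap. The hypothesis only yields a good set $G_k$ of measure $>C_k\,\m(E^{(k)})$ -- large \emph{relative} to the bad set -- and you must boost this to measure $\ge 1-\delta_k$ with $\sum_k\delta_k<\infty$ in an \emph{arbitrary} aperiodic system before Borel--Cantelli applies. Your spreading device (cover by translates $T_{t_j}^{-1}G_k$, with the number of translates of order $\mu(G_k)^{-1}\log(1/\delta_k)$ so that the enlarged bad set $\bigcup_j T_{t_j}^{-1}E^{(k)}$ has measure $\lesssim \log(1/\delta_k)/C_k$) relies on ``mean-ergodic averaging over translates,'' i.e.\ on ergodicity or quasi-independence of the chosen translates. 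Strong sweeping out must be verified in \emph{every} aperiodic system, including non-ergodic ones, where translates of a set confined to a single invariant component can never cover the space and no bound on the number of required translates is available; even in the ergodic case the greedy/mean-ergodic selection of the $t_j$ is an argument you would have to supply, not a routine remark. The standard repairs -- an ergodic-decomposition argument with a measurable selection of the construction on each component, or (as in ABJLRW) building $E^{(k)}$ directly inside a tall Rokhlin tower of the target flow so that the good event has near-full measure by construction -- are precisely the missing content. So you have correctly located the obstacle but not overcome it; to match the paper, transfer the denial to $\setR$ via the contrapositive of \cref{prop:Calderon1} and invoke \autocite[theorem 2.2]{ABJLRW}, or else carry out the tower construction in detail.
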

\begin{proof} This will follow from \autocite[Theorem 2.2]{ABJLRW}
and Proposition \ref{prop:Calderon1} below.
\end{proof}

Let $\lambda$ denote the Lebesgue measure on $\setR$.
\begin{itemize}
\item For a Lebesgue measurable subset $B$ of $\setR$, we define the
\emph{upper density} of $B$ by\\
  \begin{equation} \overline{d}(B)=\displaystyle\limsup_{J\to
\infty}\frac{1}{J} \lambda\big(B\cap [0,J]\big).
  \end{equation}

\item Similarly, for a locally integrable function $\phi$ on $\setR$, we define the \emph{upper
density} of the function as follows:
  \begin{equation} \overline{d}(\phi)=\displaystyle\limsup_{J\to
\infty}\frac{1}{J} \int_{[0,J]}\phi(t)dt.
  \end{equation}

\end{itemize}

\begin{prop}[label={prop:Calderon1}]{}{} Let $(s_n)$ be a sequence of positive real
numbers. Suppose there exists $\epsilon\in (0,1)$, $P_0>0$ and a finite
constant $C$ such that for all $P$ and Lebesgue measurable set $E\subset \setR$
  \begin{equation}
    \label{eq:11maxinR} \overline{d}\Big(t\in \setR| \max_{P_0\leq N\leq P}
\setA_{n\in [N]}\setone_{E}(s_n+t)\geq
1-\epsilon\Big)\leq C\overline{d}(E).
  \end{equation} Then for any dynamical system
$(\tilde{X},{\beta},\m,U^t)$, and for any $\tilde{E}\in \beta$ we
have
  \begin{equation}
    \label{eq:11max in1} \mathfrak{m}\Big(x| \max_{P_0\leq N\leq P}
\setA_{n\in [N]}\setone_{\tilde{E}}(U^{s_{n}}x)\geq 1-\epsilon\Big)\leq
C\m(\tilde{E}).
  \end{equation}
\end{prop}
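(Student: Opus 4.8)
The plan is to transfer the maximal inequality from the real line (the continuous setting of \eqref{eq:11maxinR}) to an arbitrary dynamical system $(\tilde X,\beta,\m,U_t)$ via a transference argument in the spirit of Calderón. First I would fix the dynamical system, the set $\tilde E\in\beta$, and a large parameter $P$, and introduce for $x\in\tilde X$ the orbit function $t\mapsto \setone_{\tilde E}(U_t x)$ on $\setR$. The key observation is that the averages appearing in \eqref{eq:11max in1}, namely $\frac1N\sum_{n\le N}\setone_{\tilde E}(U_{s_n}x)$, are exactly the averages in \eqref{eq:11maxinR} applied to the set $E_x:=\{t\in\setR: U_t x\in\tilde E\}$ evaluated at the point $t=0$ — and more usefully, applied to the translated set $E_{x}$ at a point $t$ gives the corresponding average along the orbit of $U_t x$. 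So the event $\{x:\max_{P_0\le N\le P}\frac1N\sum_{n\le N}\setone_{\tilde E}(U_{s_n}x)\ge 1-\epsilon\}$ pulls back, along the orbit of $x$, to the set $F_x:=\{t:\max_{P_0\le N\le P}\frac1N\sum_{n\le N}\setone_{E_x}(s_n+t)\ge 1-\epsilon\}$.

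Next I would run the standard averaging-over-a-long-orbit computation. Since the maximum ranges only over $P_0\le N\le P$, the quantity $\setone_{F_x}(t)$ depends only on the restriction of $E_x$ to an interval of length $O(\max_n s_n$ for $n\le P)$ around $t$; call this length $L=L(P)$. For a fixed large $J$, integrate $\setone_{F_x}(t)$ over $t\in[0,J]$ and then integrate over $x\in\tilde X$ against $\m$. On one hand, by Fubini and the $U_t$-invariance of $\m$, $\int_{\tilde X}\int_0^J \setone_{F_x}(t)\,dt\,d\m(x)$ is, up to the boundary error from the finite window $L$, equal to $J\cdot\m\big(\{x:\max_{P_0\le N\le P}\frac1N\sum_{n\le N}\setone_{\tilde E}(U_{s_n}x)\ge 1-\epsilon\}\big)$; the boundary contribution is $O(L)$, which is negligible after dividing by $J$ and sending $J\to\infty$. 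On the other hand, for each fixed $x$, the hypothesis \eqref{eq:11maxinR} applied to the set $E=E_x\subset\setR$ gives $\limsup_{J\to\infty}\frac1J\int_0^J\setone_{F_x}(t)\,dt=\overline d(F_x)\le C\,\overline d(E_x)$. Finally, $\overline d(E_x)=\limsup_{J\to\infty}\frac1J\int_0^J\setone_{\tilde E}(U_t x)\,dt$, and integrating this over $x$ and using invariance of $\m$ together with (reverse) Fatou gives $\int_{\tilde X}\overline d(E_x)\,d\m(x)\le\m(\tilde E)$. Combining the two sides yields \eqref{eq:11max in1}.

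The main technical obstacle is interchanging the $\limsup_{J\to\infty}$ with the integral $\int_{\tilde X}d\m(x)$ in the right direction: one needs $\int \overline d(E_x)\,d\m \le \m(\tilde E)$, which follows from reverse Fatou since $\frac1J\int_0^J \setone_{\tilde E}(U_t x)\,dt\le 1$ is uniformly bounded and its integral over $x$ equals $\m(\tilde E)$ for every $J$ by invariance; and simultaneously one needs a lower bound on the left-hand side along the same sequence $J_k\to\infty$, which is why the argument should be organized by first choosing a sequence $J_k$ realizing $\limsup_k\frac1{J_k}\int_{\tilde X}\int_0^{J_k}\setone_{F_x}(t)\,dt\,d\m(x)$ and then passing inside. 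A secondary point requiring a little care is the measurability (jointly in $(x,t)$) of the orbit function and of $F_x$, which is exactly where the hypothesis that the flow is measurable — the map $(x,t)\mapsto U_t x$ is jointly measurable — gets used, so that Fubini applies throughout. Once these two points are handled, the rest is the routine Calderón transference bookkeeping sketched above.
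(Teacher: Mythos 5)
Your overall scheme is the same Calder\'on-style transference the paper uses: pull $\tilde{E}$ back to the orbit sets $E_x$, observe that $\setone_{F_x}(t)=\setone_B(U_tx)$ where $B$ is the maximal set in \eqref{eq:11max in1}, and average over a long time window. The left-hand interchange in your argument is fine, and in fact cleaner than you state: since $\setone_{F_x}(t)=\setone_B(U_tx)$ exactly, Fubini and invariance give $\int_{\tilde{X}}\frac1J\int_0^J\setone_{F_x}(t)\,dt\,d\m(x)=\m(B)$ for every $J$ with no boundary error, and reverse Fatou (domination by $1$ on a probability space) then gives $\m(B)\le\int_{\tilde{X}}\overline{d}(F_x)\,d\m(x)$.

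The genuine gap is on the right-hand side, exactly at the step you flag as the main obstacle. You need $\int_{\tilde{X}}\overline{d}(E_x)\,d\m(x)\le\m(\tilde{E})$, but reverse Fatou gives the \emph{opposite} inequality: with $g_J(x):=\frac1J\int_0^J\setone_{\tilde{E}}(U_tx)\,dt\le1$ and $\int_{\tilde X} g_J\,d\m=\m(\tilde{E})$ for all $J$, reverse Fatou yields $\m(\tilde{E})\le\int_{\tilde{X}}\overline{d}(E_x)\,d\m(x)$, while ordinary Fatou only controls the integral of the \emph{lower} density. For a general uniformly bounded family with constant integrals the integral of the limsup can be strictly larger, so this step needs real input. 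Two repairs are available: (1) invoke the pointwise (Birkhoff) ergodic theorem for measure-preserving flows, which makes $g_J$ converge a.e., so $\overline{d}(E_x)$ is an actual limit and bounded convergence gives $\int_{\tilde{X}}\overline{d}(E_x)\,d\m=\m(\tilde{E})$; or (2) restructure as the paper does: first deduce from \eqref{eq:11maxinR}, by a truncation/contradiction argument, a uniform finite-window inequality --- a single $J_0=J_0(\eta)$ such that $\lambda\big(t\le J_0:\max_{P_0\le N\le P}\frac1N\sum_{n\in[N]}\setone_E(s_n+t)\ge1-\epsilon\big)\le(C+\eta)\lambda(E\cap[0,J_0])$ for \emph{every} measurable $E$ --- and then integrate this fixed-scale inequality over $x$ with $E=E_x$; both sides average exactly by Fubini and invariance, no limsup/integral interchange is needed, and letting $\eta\to0$ finishes. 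As written your proof is incomplete at this point; with the ergodic theorem cited (or the uniform-$J_0$ lemma inserted) the rest of your bookkeeping goes through.
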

We will use here a similar argument as Calderon's transference principle \cite{Calderon}.
\begin{proof} 
Let $P>P_0$ be an arbitrary integer. Choose $\eta>0$ small. It will be sufficient to show that for any $\tilde{E}\in \beta$ we
have\\
  \begin{equation}
    \label{eq:12max in1} \mathfrak{m}\Big(x| \max_{P_0\leq N\leq P}
\setA_{n\in [N]}\setone_{\tilde{E}}(U^{s_{n}}x)\geq 1-\epsilon\Big)\leq
(C+\eta)\m(\tilde{E}).
  \end{equation}
To prove \eqref{eq:12max in1}, we need the following lemma:
\begin{lem}{}{}
Under the hypothesis of Proposition \ref{prop:Calderon1}, there exists $J_0=J_0(\eta)\in \setN$ such that for every Lebesgue measurable set $E\subset \setR$ the following holds:\\
\begin{equation}
    \label{eq:131maxinR} \lambda\Big(t\leq J_0| \max_{P_0\leq N\leq P}
\setA_{n\in [N]}\setone_{E}(s_n+t)\geq
1-\epsilon\Big)\leq (C+\eta)\lambda(E\cap [0,J_0]).
  \end{equation}
\end{lem}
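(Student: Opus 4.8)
The plan is to prove the lemma by the standard periodization/transference device: deduce the one-scale estimate \eqref{eq:131maxinR} from the density estimate \eqref{eq:11maxinR} by tiling a hypothetical bad configuration on $[0,J_0]$ periodically along $\setR$, producing a set whose associated maximal set has large upper density while the set itself has small upper density, contradicting \eqref{eq:11maxinR}. The first ingredient is a \emph{locality} remark. Fix $R\in\setN$ with $s_n<R$ for every $n\in[P]$; then for $t\in[0,J]$ and $n\le N\le P$ every sampled point $s_n+t$ lies in $[0,\,J+R]$, so
\[
B_J(E):=\Big\{t\in[0,J]:\ \max_{P_0\le N\le P}\dfrac1{N}\sum_{n\in[N]}\setone_E(s_n+t)\ \ge\ 1-\epsilon\Big\}
\]
depends on $E$ only through $E\cap[0,J+R]$, and $B_J(E)\cap[0,J-R]$ depends only on $E\cap[0,J]$.

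Given $\eta>0$, I would fix $J_0$ large (to be pinned down at the end) and, for an arbitrary measurable $E$, put $E_0:=E\cap[0,J_0]$ and form $F:=\bigcup_{k\ge 0}(E_0+kJ_0)$. The copies are essentially disjoint, so $\overline{d}(F)=\lambda(E_0)/J_0$ \emph{exactly}. For $t$ in the interior slab $[kJ_0,\,(k+1)J_0-R]$ of the $k$-th block the window $[t,t+R]$ stays inside that block, where $F$ agrees with $E_0+kJ_0$; using that $E_0$ and $E$ coincide on $[0,J_0]$, such a $t$ lies in the maximal set of $F$ exactly when $t-kJ_0\in B_{J_0}(E)\cap[0,J_0-R]$. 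Summing over the blocks shows the maximal set of $F$ has upper density at least $\lambda\big(B_{J_0}(E)\cap[0,J_0-R]\big)/J_0$. Feeding $F$ into \eqref{eq:11maxinR} and multiplying through by $J_0$ gives $\lambda\big(B_{J_0}(E)\cap[0,J_0-R]\big)\le C\,\lambda(E\cap[0,J_0])$, and restoring the final $R$-slab yields
\[
\lambda\big(B_{J_0}(E)\big)\ \le\ C\,\lambda(E\cap[0,J_0])\ +\ R .
\]

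The crux — and the only place where I expect real work — is converting this into the clean bound \eqref{eq:131maxinR}, i.e. absorbing the additive error $R$ into the slack $\eta\,\lambda(E\cap[0,J_0])$; note that $R$ does not shrink as $J_0\to\infty$, since $B_{J_0}(E)$ can live entirely in the last $R$-slab and the average near the endpoint can still see a sliver of $E$ just beyond $J_0$. Whenever $\lambda(E\cap[0,J_0])\ge R/\eta$ the displayed inequality already gives \eqref{eq:131maxinR}, and this is the only regime that occurs in Proposition~\ref{prop:Calderon1}: there the lemma is applied to the return-time set $E=\{t:\ U_tx\in\tilde E\}$, whose measure in $[0,J_0]$ is (on average in $x$) exactly $\m(\tilde E)J_0$, hence $\ge R/\eta$ once $J_0$ is large. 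The genuinely delicate alternative, $\lambda(E\cap[0,J_0])$ small, is where one must exploit that the windowed maximal operator of $(s_n)$ does not concentrate a large portion of $[0,J_0]$ onto a set of tiny measure — a dispersal property of the sequences in play, which is exactly the content that makes hypothesis \eqref{eq:11maxinR} non-vacuous for them. So the endgame is: choose $J_0=J_0(\eta)\ge R/\eta$, dispatch the case $\lambda(E\cap[0,J_0])\ge R/\eta$ by the displayed estimate, and treat the complementary small-mass case with that extra structure; beyond this, only the disjointness and edge bookkeeping for the periodized set $F$ remains, and that is routine.
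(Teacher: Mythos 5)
Your periodization is the right mechanism, and it is in substance what the paper's own proof of this lemma does: the paper argues by contradiction, truncates the offending set to $[0,J_k+s_P]$, and then passes to upper densities against \eqref{eq:11maxinR}; since a bounded set has zero upper density, that passage is only meaningful after tiling the truncated set along $\setR$, which is precisely the step you carry out explicitly. Up to the bound $\lambda\big(B_{J_0}(E)\big)\le C\,\lambda(E\cap[0,J_0])+R$ your computation is correct.

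The gap is your endgame. The ``small-mass'' case you leave open cannot be closed by any dispersal property of $(s_n)$: take $E=(J_0,\,J_0+s_P]$ and $\delta_0:=\min_{n\le P}s_n>0$; then for every $t\in(J_0-\delta_0,\,J_0]$ all sample points $s_n+t$ lie in $E$, so the left side of \eqref{eq:131maxinR} is at least $\min(\delta_0,J_0)>0$ while the right side is $0$. Thus in that regime the inequality as written fails for every sequence of positive reals and every choice of $J_0(\eta)$; the additive $R$ you isolated is a genuine edge effect of the statement, not something extra structure of the sequence can absorb. Your fallback, that only the large-mass regime occurs inside Proposition~\ref{prop:Calderon1}, is also not valid pointwise: $\lambda(E_x\cap[0,J_0])$ equals $\m(\tilde E)J_0$ only on average over $x$, and for many $x$ it is much smaller, so the per-$x$ application does meet the bad regime. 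The repair is to charge the edge to the window the maximal operator actually sees: periodize $E\cap[0,J_0+R]$ with period $J_0+R$ rather than $E\cap[0,J_0]$ with period $J_0$. The same block computation then yields $\lambda\big(B_{J_0}(E)\big)\le C\,\lambda\big(E\cap[0,J_0+R]\big)$ for every measurable $E$, with no additive error and no case split, and this corrected form is exactly what the proof of Proposition~\ref{prop:Calderon1} needs: integrating in $x$ gives $J_0\,\m(B)\le C\,(J_0+R)\,\m(\tilde E)$, and choosing $J_0\ge CR/\eta$ gives the bound $(C+\eta)\m(\tilde E)$. In other words, the lemma should be read with $\lambda(E\cap[0,J_0+s_P])$ on the right (or with the hypothesis $E\subset[0,J_0]$), which is implicitly how it is used; once you make that adjustment, your argument is complete and is the honest version of the paper's.
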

\begin{proof}
On the contrary, let us assume that the conclusion is not true. Then for every integer $J_k$, there exists a Lebesgue measurable set $E_k\subset \setR$ such that
\begin{equation}
    \label{eq:231maxinR} \lambda\Big(t\leq J_k| \max_{P_0\leq N\leq P}
\setA_{n\in [N]}\setone_{E_k}(s_n+t)\geq
1-\epsilon\Big)> (C+\eta)\lambda(E_k\cap [0,J_k]).
  \end{equation}
Fix such a $J_k$ and $E_k.$ Observe that the set $E_k':= E_k\cap [0,J_k+S_p]$ also satisfies \ref{eq:231maxinR}. Then for any $J>J_k+S_p$, we have\\
\begin{equation}
    \label{eq:232maxinR} \lambda\Big(t\leq J| \max_{P_0\leq N\leq P}
\setA_{n\in [N]}\setone_{E_k'}(s_n+t)\geq
1-\epsilon\Big)> (C+\eta)\lambda(E_k'\cap [0,J]).
  \end{equation}
Letting $J\to \infty$, we get
\begin{equation}
    \label{eq:233maxinR} \overline{d}\Big(t\in \setR: \max_{P_0\leq N\leq P}
\setA_{n\in [N]}\setone_{E_k'}(s_n+t)\geq
1-\epsilon\Big)> (C+\eta)\overline{d}(E_k').
  \end{equation}
  But this is a contradiction to our hypothesis \eqref{eq:11maxinR}. This finishes the proof of our lemma.
\end{proof}
Now we will establish \eqref{eq:12max in1}.
Let $\tilde{E}$ be an arbitrary element of ${\beta}$.
For a fixed element $x\in \tilde{X}$, define $\setone_{{E_x}}$ by
  \begin{equation}\label{eq:pullback} \setone_{{E_x}}(t):= \setone_{\tilde{E}}{(U^{t}
x)}
    \end{equation} Applying  \eqref{eq:131maxinR}
on the set $E_x$ we get\\
  \begin{equation}
    \label{eq:132maxinR} \lambda\Big(t\leq J_0| \max_{P_0\leq N\leq P}
\setA_{n\in [N]}\setone_{E_x}(s_n+t)\geq
1-\epsilon\Big)\leq (C+\eta)\lambda(E_x\cap [0,J_0]).
  \end{equation} By definition of the sets $E_x$ and $\tilde{E}$,  we get\\
  \begin{equation*} \max_{P_0 \leq N\leq
P}\setA_{n\in [N]}\setone_{\tilde{E}}(U^{s_{n}+t}x)=\max_{P_0 \leq N\leq
P}\setA_{n\in [N]}\setone_{E_x}({s_{n}}+t), \text{ for all } t.
  \end{equation*}  Substituting the
above equality in  \eqref{eq:132maxinR}, we get
  \begin{equation}
    \label{eq:max inR3} \lambda\Big(t|\ t\leq J_0,\ \max_{P_0 \leq
N\leq P}\displaystyle\setA_{n\in [N]}\setone_{\tilde{E}}(U^{s_{n}+t}x)\geq 1-\epsilon\Big)\leq
(C+\eta)\lambda(E_x\cap [0,J_0]).
  \end{equation} Introducing the set\\
  \begin{equation} B:=\Big\{y: \max_{P_0 \leq N\leq P}\displaystyle\setA_{n\in [N]}\setone_{\tilde{E}}(U^{s_n} y)\geq
1-\epsilon\Big\}
  \end{equation} we can rewrite equation  \eqref{eq:max inR3} as
  
    \begin{align*} &\lambda\big(t: t\leq J_0 \text{ and }
\setone_B(U^t x)=1\big)\leq (C+\eta)\lambda(E_x\cap[0,J_0])\\ \intertext{ which implies}
&\int_{\setR} \setone_{\{t\leq J_0\}}(t)\setone_B(U^t x)dt\leq
(C+\eta)\int_{[0,J_0]}\setone_{E_x}(t)dt
    \end{align*}

  Integrating the above equation with respect to the $x$ variable, using
 \eqref{eq:pullback} in the right hand side and applying Fubini's
theorem we get
  \begin{equation} \int_{\setR}\int_{\tilde{X}} \setone_B(U^t
x)d\m(x)\setone_{\{t\leq J_0\}}(t)dt\leq
(C+\eta)\int_{[0,J_0]}\int_{\tilde{X}}\setone_{\tilde{E}}(U^t x)d\m(x)dt
  \end{equation} By using the fact that $U^t$ is measure preserving we
get\\
  \begin{equation}
    \label{eq:131} \m (B) J_0 \leq (C+\eta) \m(\tilde{E}) J_0.
  \end{equation} This gives us that \\
  \begin{equation} \m(B)\leq (C+\eta) \m (\tilde{E}),
  \end{equation} finishing the proof of the desired maximal inequality
 \eqref{eq:12max in1}. This completes the proof.
\end{proof}

One can generalize the above proposition as follows:
\begin{prop} [label={prop:Calderon2}]{}{} Let $(s_n)$ be a sequence of
positive real numbers. Suppose there exists a finite constant $C$ such
that for all locally integrable functions $\phi$ and $\gamma>0$ we have
  \begin{equation}
    \label{eq:1maxinR} \overline{d}\Big(t| \sup_{N}
\displaystyle\setA_{n\in [N]}\phi(s_n+t)\geq \gamma\Big)\leq
\frac{C}{\gamma}\overline{d}(|\phi|).
  \end{equation} Then for any dynamical system
$(\tilde{X},{\beta},\m,U^t)$, and for any $\tilde{f}\in L^1$ we have\\
  \begin{equation}
    \label{eq:max in1} \m\Big(x| \sup_{ N}
\displaystyle\setA_{n\in [N]}{\tilde{f}}(U^{s_{n}}x)\geq
\gamma \Big)\leq \frac{C}{\gamma}\int_{\tilde{X}}|\tilde{f}|d\m.
  \end{equation}
\end{prop}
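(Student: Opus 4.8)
The plan is to run the transference argument used for \cref{prop:Calderon1} essentially verbatim, with three adjustments: the indicator $\setone_{\tilde E}$ is replaced by a general $\tilde f\in L^1(\tilde X)$, the threshold $1-\epsilon$ is replaced by $\gamma$ and the constant $C$ by $C/\gamma$, and — since the supremum in \eqref{eq:max in1} runs over all $N$ rather than a finite window — one final monotone limit in $P$ is appended. For the reductions: since the hypothesis \eqref{eq:1maxinR} involves $|\phi|$ and since replacing $\tilde f$ by $|\tilde f|$ only enlarges the level set in \eqref{eq:max in1} without changing its right-hand side, we may assume $\tilde f\ge 0$. Writing $a_N(x):=\tfrac1N\sum_{n\in[N]}\tilde f(U_{s_n}x)$, for each $\gamma'>0$ we have $\{x:\sup_N a_N(x)>\gamma'\}=\bigcup_P\{x:\max_{1\le N\le P}a_N(x)>\gamma'\}$, so $\m(\sup_N a_N>\gamma')\le\lim_P\m(\max_{1\le N\le P}a_N\ge\gamma')$, and then $\m(\sup_N a_N\ge\gamma)=\lim_k\m(\sup_N a_N>\gamma-\tfrac1k)$. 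Hence it suffices to prove, for every integer $P\ge 1$ and every $\gamma'>0$,
\[
\m\Big(x:\max_{1\le N\le P}a_N(x)\ge\gamma'\Big)\le\frac{C}{\gamma'}\int_{\tilde X}\tilde f\,d\m .
\]

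Next I would establish a local inequality on $\setR$. Fix $P$, $\gamma'$, and a small $\eta>0$. Running the contradiction argument of the lemma inside the proof of \cref{prop:Calderon1}, now with the running maximum $\max_{1\le N\le P}$ (which is dominated by the supremum in \eqref{eq:1maxinR}, so that hypothesis still applies) and with $\gamma',\,C/\gamma'$ in place of $1-\epsilon,\,C$, one gets $J_0=J_0(\eta,P,\gamma')\in\setN$ such that for every nonnegative locally integrable $\phi$,
\[
\lambda\Big(t\le J_0:\max_{1\le N\le P}\frac1N\sum_{n\in[N]}\phi(s_n+t)\ge\gamma'\Big)\le\frac{C+\eta}{\gamma'}\int_{[0,J_0]}\phi(t)\,dt .
\]
As there, if this failed for every $J_0$ one would produce, for arbitrarily large $J$, a nonnegative $\phi$ witnessing the reverse inequality on $[0,J]$; truncating $\phi$ to a bounded interval and then repeating it periodically with period slightly larger than $J$ converts this into a violation of \eqref{eq:1maxinR} at the level of upper densities.

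Then I would transfer this to the dynamical system. For $\tilde f\ge 0$ in $L^1(\tilde X)$ and $x\in\tilde X$, set $\phi_x(t):=\tilde f(U_t x)$; by Fubini and measure preservation $\int_0^J\phi_x\,dt<\infty$ for a.e.\ $x$, so $\phi_x$ is locally integrable for a.e.\ $x$. Applying the local inequality to $\phi_x$ and using $\sum_{n\in[N]}\phi_x(s_n+t)=\sum_{n\in[N]}\tilde f(U_{s_n+t}x)$, the set $B:=\{y:\max_{1\le N\le P}a_N(y)\ge\gamma'\}$ satisfies $\int_{[0,J_0]}\setone_B(U_t x)\,dt\le\frac{C+\eta}{\gamma'}\int_{[0,J_0]}\tilde f(U_t x)\,dt$ for a.e.\ $x$. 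Integrating in $x$, applying Fubini, and using that each $U_t$ preserves $\m$ gives $J_0\,\m(B)\le\frac{C+\eta}{\gamma'}\,J_0\int_{\tilde X}\tilde f\,d\m$, hence $\m(B)\le\frac{C+\eta}{\gamma'}\int_{\tilde X}\tilde f\,d\m$; letting $\eta\downarrow 0$ gives the displayed bound of the first paragraph, which together with the reductions there yields \eqref{eq:max in1}.

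The only genuinely non-routine point is the local inequality: extracting from the density hypothesis \eqref{eq:1maxinR} on all of $\setR$ a bound over a truly finite interval $[0,J_0]$, with $J_0$ independent of $\phi$. This is where the hypothesis is used, and the periodization bookkeeping — checking that passing to a periodic function neither shrinks the level set nor inflates the upper density — is the step that needs care; it is, however, identical in substance to the corresponding step for \cref{prop:Calderon1}. Passing from an indicator to an $L^1$ function, inserting the factor $1/\gamma$, and appending the $P\to\infty$ limit are all routine.
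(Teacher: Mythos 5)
Your skeleton is the intended one (the paper itself omits this proof, saying only that it is analogous to \cref{prop:Calderon1}), and your reductions are fine: passing to $|\tilde f|$, reducing to a finite maximum $\max_{1\le N\le P}$ by monotone limits in $P$, and the $\gamma-\tfrac1k$ limit are all routine and correct, as is the final Fubini/measure-preservation step. The genuine problem is the intermediate ``local inequality'' exactly as you state it: with the integral of $\phi$ taken only over $[0,J_0]$ it is \emph{false}, for any choice of $J_0$. Take $S:=\max_{n\le P}s_n$, $m:=\min_{n\le P}s_n$ and $\phi:=\gamma'\,\setone_{(J_0,\,J_0+S]}$. For every $t\in(J_0-m,\,J_0]$ one has $s_n+t\in(J_0,\,J_0+S]$ for all $n\le P$, so $\frac1N\sum_{n\in[N]}\phi(s_n+t)=\gamma'$ for every $N\le P$; hence the left-hand side is at least $\min(m,J_0)>0$ while $\int_{[0,J_0]}\phi=0$. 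The source of the failure is that the maximal operator at times $t\le J_0$ reads $\phi$ on $[0,J_0+S]$, not on $[0,J_0]$, and your periodization sketch runs into precisely this point: to keep the level set you must keep the mass of $\phi$ on $(J_0,J_0+S]$, and then the periodized function has upper density $\frac1{J_0+S}\int_{[0,J_0+S]}\phi$, which is not controlled by $\int_{[0,J_0]}\phi$. (The same defect is present in the lemma inside the paper's proof of \cref{prop:Calderon1}, so you have faithfully reproduced the intended argument, gap included.)

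The repair is small and your periodization idea is exactly what proves it. Claim instead: for every $J>0$ and every nonnegative locally integrable $\phi$,
\[
\lambda\Big(t\in[0,J]:\ \max_{1\le N\le P}\frac1N\sum_{n\in[N]}\phi(s_n+t)\ge\gamma'\Big)\ \le\ \frac{C}{\gamma'}\int_{[0,J+S]}\phi(t)\,dt .
\]
To see this, let $\phi':=\phi\,\setone_{[0,J+S]}$ and let $\tilde\phi$ be the $(J+S)$-periodic extension of $\phi'$. On each period the averages of $\tilde\phi$ at times $t\in[k(J+S),k(J+S)+J]$ coincide with those of $\phi'$ at $t-k(J+S)$, so the level set of $\tilde\phi$ has upper density at least $\lambda(\text{level set of }\phi\text{ in }[0,J])/(J+S)$, while $\overline{d}(\tilde\phi)=\frac1{J+S}\int_{[0,J+S]}\phi$; applying \eqref{eq:1maxinR} (the $\max_{N\le P}$ level set is contained in the $\sup_N$ level set) and multiplying by $J+S$ gives the display, with the exact constant and no $\eta$ or special $J_0$ needed. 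Feeding this into your transference step with $\phi_x(t)=|\tilde f|(U_tx)$ and integrating in $x$ yields $J\,\m(B)\le\frac{C}{\gamma'}(J+S)\int_{\tilde X}|\tilde f|\,d\m$; dividing by $J$ and letting $J\to\infty$ (with $P$, hence $S$, fixed) gives $\m(B)\le\frac{C}{\gamma'}\int_{\tilde X}|\tilde f|\,d\m$, and your earlier reductions then complete the proof of \eqref{eq:max in1}.
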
 The proof of this proposition is very similar to the proof
of Proposition \ref{prop:Calderon1}, and hence we will skip the proof.

The underlying principle of Theorem \ref{thm:conze} is the Conze principle
which has been widely used to prove many results related to the Birkhoff's pointwise ergodic theorem.  Originally, the theorem was proved for a single
transformation system by Conze \autocite{conze}. Here we will prove a version
of the theorem for flows. The theorem is not required for proving
Proposition \ref{prop:1}, but it
is of independent interest.

\begin{thm}[label={thm:conze1}]{Conze principle for flows}{} Let $S=
(s_n)$ be a sequence of positive real numbers and\\ $(X,\Sigma,\mu,
T^t)$ be an aperiodic flow which satisfies the following maximal
inequality:

  There exist a finite constant $C$ such that for all ${f}\in L^1(X)$
and $\gamma>0$ we have

  \begin{equation}
    \label{eq:max in} \mu\Big(x| \sup_{N}
\displaystyle\setA_{n\in [N]}{{f}}(T^{s_{n}}x)\geq
\gamma\Big)\leq \frac{C}{\gamma}\int_{X} |f|d\mu.
  \end{equation}

  Then the above maximal inequality holds in every dynamical system with the same constant $C$.
 
\end{thm}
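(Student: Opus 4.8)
\noindent\emph{Proof strategy.} The plan is to deduce the conclusion from the hypothesis \cref{eq:max in} — which a priori is available only in the one aperiodic flow $(X,\Sigma,\mu,T_t)$ — by passing through the translation action on $\setR$: first transfer \emph{backwards} to $\setR$ (the converse direction to \cref{prop:Calderon1,prop:Calderon2}), then transfer \emph{forwards} into an arbitrary dynamical system. Because a flow box of finite length can accommodate only finitely many of the shifts $s_n$, it is convenient to work throughout with the truncated averages $\max_{1\le N\le P}\frac1N\sum_{n\in[N]}(\cdot)$ and to recover $\sup_N$ only at the very end.

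\emph{Step 1: a maximal inequality on $\setR$.} Fix a positive integer $P$ and put $S_P:=\max_{1\le n\le P}s_n$. I would first show that for every locally integrable $\phi$ and every $\gamma>0$,
\[
\overline d\Big(t:\ \max_{1\le N\le P}\tfrac1N\sum_{n\in[N]}\phi(s_n+t)\ge\gamma\Big)\ \le\ \frac C\gamma\,\overline d(|\phi|).
\]
To prove this, use aperiodicity of $(X,\Sigma,\mu,T_t)$ through the Rokhlin lemma for flows: for any $L>0$ there is a measurable base $B$ with the sets $T_tB$, $0\le t<L$, pairwise disjoint, and — since each $T_s$ is measure preserving — the restriction of $\mu$ to the flow box $X_0:=\bigcup_{0\le t<L}T_tB$ has the product form $\mu|_{X_0}=\nu\otimes\lambda|_{[0,L)}$ for a finite measure $\nu$ on $B$ with $\nu(B)>0$. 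Assume $\phi\ge0$, take $L>S_P$, and define $f$ on $X$ by $f(T_tb):=\phi(t)$ for $b\in B$, $0\le t<L$, and $f:=0$ off $X_0$, so that $f\in L^1(X)$ with $\int_X f\,d\mu=\nu(B)\int_0^L\phi$. For $x=T_tb$ with $0\le t<L-S_P$ and every $N\le P$ one has $s_n+t<L$ for all $n\le N$, hence $f(T_{s_n}x)=\phi(s_n+t)$ and therefore $\max_{N\le P}\frac1N\sum_{n\le N}f(T_{s_n}x)=\max_{N\le P}\frac1N\sum_{n\le N}\phi(s_n+t)$. Applying \cref{eq:max in} to $f$ and using the product structure,
\[
\nu(B)\,\lambda\Big(t<L-S_P:\ \max_{N\le P}\tfrac1N\sum_{n\le N}\phi(s_n+t)\ge\gamma\Big)\ \le\ \frac C\gamma\int_X f\,d\mu\ =\ \frac C\gamma\,\nu(B)\int_0^L\phi .
\]
Cancelling $\nu(B)$, dividing by $L-S_P$, and letting $L\to\infty$ (so that $L/(L-S_P)\to1$) yields the displayed inequality, with the sharp constant $C$.

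\emph{Step 2: transfer forwards and remove the truncation.} By the transference argument in the proof of \cref{prop:Calderon1}, run in the weak-$(1,1)$ formulation exactly as \cref{prop:Calderon2} is obtained from \cref{prop:Calderon1}, the inequality of Step 1 transfers: for every dynamical system $(\tilde X,\beta,\m,U_t)$, every $\tilde f\in L^1(\tilde X)$, every $\gamma>0$ and every $P$,
\[
\m\Big(x:\ \max_{1\le N\le P}\tfrac1N\sum_{n\in[N]}\tilde f(U_{s_n}x)\ge\gamma\Big)\ \le\ \frac C\gamma\int_{\tilde X}|\tilde f|\,d\m .
\]
Now let $P\to\infty$: the sets on the left increase to $\{x:\sup_N\frac1N\sum_{n\in[N]}\tilde f(U_{s_n}x)\ge\gamma\}$, and since $\m$ is a finite measure it is continuous from below, so \cref{eq:max in} holds in $(\tilde X,\beta,\m,U_t)$ with the same constant $C$. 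This limit must be taken in the target probability space and not on $\setR$: it would be circular to prove the $\sup_N$ version of \cref{eq:1maxinR} and invoke \cref{prop:Calderon2}, since a box of finite length cannot control $\sup_N$ at once and $\overline d$ is not continuous under increasing unions.

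\emph{Main obstacle.} The two substantive ingredients are (i) the Rokhlin lemma for flows — arbitrarily long flow boxes of positive measure in an aperiodic measure-preserving flow on a Lebesgue space, together with the local product structure of $\mu$ on such a box; this is standard (it follows, e.g., from the Ambrose--Kakutani representation) and is the only place where aperiodicity of $(X,\Sigma,\mu,T_t)$ is used — and (ii) keeping the comparison between the $X$-averages and the $\setR$-averages exact (so the constant stays $C$) while respecting the finite length of the box. I expect (ii), and in particular the order of limits — transferring each truncated inequality first and only then letting $P\to\infty$ inside a finite measure space — to be the point one must get right; the remaining bookkeeping is routine adaptation of \cref{prop:Calderon1,prop:Calderon2}.
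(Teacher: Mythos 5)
Your proof is correct and follows essentially the same route as the paper: aperiodicity is used through a Rokhlin/Ambrose--Kakutani flow box (the paper cites Lind's theorem, giving a tower on which $\nu(A)=\mu(T_A(F))$ is a constant multiple of Lebesgue measure) to pull the $P$-truncated maximal inequality back to the translation action on $\setR$ with the same constant $C$, after which the Calder\'on-type transference of \cref{prop:Calderon1} and \cref{prop:Calderon2} carries it into an arbitrary system. Your explicit ordering of limits — transferring each truncated inequality and only then letting $P\to\infty$ in the target probability space, rather than forming a $\sup_N$ density inequality on $\setR$ — is a slightly more careful rendering of the paper's appeal to \cref{prop:Calderon2}, but the substance of the argument is the same.
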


For a Lebesgue measurable subset $A$ of $\setR$, and $F\in \Sigma$, we
will use the notation $T^A(F)$ to denote the set $\cup_{t\in
A}T^t(F)$. We will say that $T^A(F)$ is disjoint if we have
$T^t(F)\cap T^s(F)=\emptyset$ for all $t\neq s.$
\begin{proof} We want to show that the maximal inequality
 \eqref{eq:max in} transfers to $\setR$. If we can show that, then
Proposition \ref{prop:Calderon2} will finish the proof.\\ Let $\phi$ be a
measurable function. After a standard reduction, we can assume that
$\gamma=1$ and $\phi$ is positive. It will be sufficient to show that
for any fixed $P$ we have
  \begin{equation}
    \label{eq:max inR5} \overline{d}\Big(t| \max_{N\leq P}
\displaystyle\setA_{n\in [N]}\phi(s_n+t)\geq 1\Big)\leq
C\overline{d} (\phi).
  \end{equation} Define
  \begin{equation} G:=\Big\{t: \max_{N\leq
P}\setA_{n\in [N]}\phi({s_{n}}+t)>1\Big\}
  \end{equation} Let $(L_n)$ be a sequence such that\\
  \begin{equation}
    \label{eq:limsup1} \overline{d}(G)=\lim_{n\to \infty}
\frac{1}{L_n-s_P}\lambda\big([0,L_n-s_P]\cap G\big)
\end{equation} Fix $L_n$ large enough so that it is bigger than $s_P$.
Then by applying the $\setR$-action version of Rohlin tower \autocite[Theorem 1]{Lind}, we can find a set $F\subset X$, such that
$T^{[0,L_n]}(F)$ is disjoint, measurable and
$\mu(T^{[0,L_n]}F)>1-\eta$, where $\eta>0$ is small. It also has the
additional property that if we define the map $\nu$ by
$\nu(A):=\mu(T^A(F))$ for any Lebesgue measurable subset $A$ of $[0,L_n]$,
then $\nu$ becomes a constant multiple of Lebesgue measure.\\ Define
$f :X\to \setR$ as follows:
\begin{align*} f(x)=
      \begin{cases} &\phi(t) \text{ when } x\in T^t(F)\\ &0 \text{
when } x\not\in T^{[0,L_n]}(F).
      \end{cases}
    \end{align*}
  Now we observe that for each $x\in T^{t}(F)$ with
$t\in[0,L_n-s_P]$ and $N\leq P$ we have
  \begin{equation*} \max_{N\leq P}\frac{1}{N}\displaystyle\sum_{n\leq
N}\phi({s_{n}}+t)=\max_{N\leq P}\frac{1}{N}\displaystyle\sum_{n\leq
N}f(T^{s_{n}}x)
  \end{equation*} Substituting this into  \eqref{eq:max in}, we get
  \begin{equation}
    \label{eq:max in6} \mu\Big(x|\ x\in T^{t}(F) \text{ for some }
t\in [0,L_n-s_P]\cap G\Big)\leq C\int_X fd\mu.
  \end{equation} Then  \eqref{eq:max in6} can be rewritten as\\
  \begin{equation}
    \label{eq:rokhlin1} \mu\big(T^{[0,L_n-s_P]\cap G} (F)\big)\leq
C\int_X fd\mu.
  \end{equation}

  Applying the change of variable formula, we can rewrite
 \eqref{eq:rokhlin1} as
    \begin{align*}
    \nu\big([0,L_n-s_P]\cap G\big)&\leq C\int_{[0,L_n]}
\phi(t) d\nu(t). \intertext{Since, $\nu$ is a constant multiple of the Lebesgue measure, we get} 
\lambda\big([0,L_n-s_P]\cap G\big)&\leq
C\int_{[0,L_n]} \phi(t) dt .
\intertext{Dividing both sides by $(L_n-s_P)$ and taking liminf and limsup, we have} \displaystyle\liminf_{n\to
\infty} \frac{1}{L_n-s_P}\lambda\big([0,L_n-s_P]\cap G\big)&\leq C
\displaystyle\limsup_{n\to \infty} \frac{L_n}{L_n-s_P}\frac{1}{L_n}\int_{[0,L_n]} \phi(t) dt \intertext{This implies, by
 \eqref{eq:limsup1}, that}
\overline{d}(G)&\leq C\overline{d}(\phi).
    \end{align*}
    This is the desired maximal inequality on $\setR.$
Now we will invoke Proposition \ref{prop:Calderon2} to finish the proof.
\end{proof} We need the following two lemmas to prove Proposition \ref{prop:1}.
\begin{lem}[label={lem:1}]{}{} Let $S$ be a finite subset of $\setR$
such that $S$ is linearly independent over $\setQ$. Suppose ${S}=
\cdot\hspace{-6.5pt}\bigcup_{q\leq Q} {S}_{q}$, be a partition of $S$
into $Q$ sets. Then there is a positive integer $r$ such that for every
$q\in [Q]$,
  \begin{equation}
    \label{eq:kronecker} rs\in I_q \text{ (mod } 1) \text{ whenever }
s\in {S}_{q},
  \end{equation} where
  \begin{equation}
    \label{eq:kronecker'} I_q:= \Big(\frac{q-1}{Q},\frac{q}{Q}\Big).
  \end{equation}
\end{lem}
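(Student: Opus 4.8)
The plan is to deduce the lemma from the density of one-parameter subgroups of a torus, i.e.\ from the continuous form of Kronecker's theorem. Enumerate $S=\{s_1,\dots,s_k\}$, and for each $i\in[k]$ let $q(i)\in[Q]$ be the unique index with $s_i\in S_{q(i)}$ (this is well posed: $\setQ$-linear independence of $S$ forces the $s_i$ to be pairwise distinct and nonzero). Consider the box
\[
U:=\prod_{i=1}^{k} I_{q(i)}\ \subset\ \mathbb{T}^k:=\setR^k/\setZ^k ,
\]
which is a nonempty open set since each $I_{q(i)}$ is a nonempty open arc. Membership of $\big(rs_1,\dots,rs_k\big)\bmod\setZ^k$ in $U$ is exactly the assertion ``$rs_i\in I_{q(i)}$ mod $1$ for all $i$'', i.e.\ ``$rs\in I_q$ mod $1$ whenever $s\in S_q$''. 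So it suffices to find $r\in\setR$ with $\big(rs_1,\dots,rs_k\big)\in U$, and for that it is enough to know that the orbit $\mathcal{O}:=\{(rs_1,\dots,rs_k)\bmod\setZ^k:r\in\setR\}$ is dense in $\mathbb{T}^k$.

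To obtain density I would argue by Pontryagin duality. The closure $\overline{\mathcal{O}}$ is a closed subgroup of $\mathbb{T}^k$, and $\overline{\mathcal{O}}=\mathbb{T}^k$ if and only if its annihilator in the dual group $\setZ^k$ is trivial. An integer vector $m=(m_1,\dots,m_k)$ annihilates $\mathcal{O}$ precisely when $r\sum_{i} m_i s_i\in\setZ$ for every $r\in\setR$, and since $r$ ranges over all of $\setR$ this forces $\sum_i m_i s_i=0$. By the hypothesis that $S$ is linearly independent over $\setQ$, the only such $m$ is $0$; hence $\overline{\mathcal{O}}=\mathbb{T}^k$, so $\mathcal{O}$ meets the open set $U$, and any $r$ with $(rs_1,\dots,rs_k)\in U$ does the job. (Alternatively one may invoke Weyl's equidistribution theorem: under $\setQ$-linear independence of $s_1,\dots,s_k$ the linear flow $t\mapsto(ts_1,\dots,ts_k)$ on $\mathbb{T}^k$ is uniquely ergodic, hence every orbit is equidistributed and in particular dense.)

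I do not expect a genuine obstacle: this is essentially a one-line application of a classical fact. The single point that deserves care is that we are using the \emph{continuous}-parameter ($r\in\setR$) version of Kronecker's theorem rather than the integer version, and this is exactly matched by the hypothesis --- $\setQ$-linear independence of $S$ itself is the correct condition for a real multiplier $r$, whereas if $r$ were restricted to $\setN$ one would instead need $1,s_1,\dots,s_k$ to be linearly independent over $\setQ$.
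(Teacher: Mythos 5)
Your proof is correct and follows essentially the same route as the paper, which simply cites Kronecker's diophantine theorem; you additionally supply a full proof of the needed continuous Kronecker statement (density of the one-parameter orbit $r\mapsto(rs_1,\dots,rs_k)$ in $\mathbb{T}^k$ under $\setQ$-linear independence) via Pontryagin duality. Your closing remark correctly identifies why independence of $S$ alone suffices here, in contrast to the discrete-multiplier version where $1,s_1,\dots,s_k$ would be needed.
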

\begin{proof}
 The above lemma is a consequence of the well known Kronecker's diophantine theorem.
\end{proof}
Let us introduce the following notation for stating our
next lemma.  Let $\mathcal{P}$ denote the set of prime numbers.  For a
fixed positive integer $m$, we want to consider the linear
independence of the set consisting of products of the form
$\prod_{p\in\mathcal{P}}p^{\frac{v_p}{m}}$ over $\setQ$, where only
finitely many of the exponents $v_p$ are nonzero.  Let $S$ be a
possibly infinite collection of such products, that is,
$S=
\big\{\prod_{p\in\mathcal{P}} p^{\frac{v_{j,p}}{m}}:j\in J\big\}$,
 where $J$ is a (countable) index set.  We call $S$ a
\emph{good} set if it satisfies the following two conditions:
\begin{enumerate}[(i)]
\item For any $j\in J$ and $p\in \mathcal{P}$, if $v_{j,p}\neq 0$,
then $v_{j,p}\not\equiv 0$ (mod $m$).
\item The vectors $\mathbf v_j= ({v_{j,p}})_{p\in\mathcal{P}}$, $j\in
J$, of exponents are \emph{different} (mod $m$). This just means that
if $i\ne j$, then there is a $p\in\mathcal{P}$ so that
$v_{j,p}\not\equiv v_{i,p}\pmod m$.
\end{enumerate}

\begin{lem}[label={lem:2}]{Besicovitch reformulation}{} Let $m$ be a
positive integer, and let the set $S$ be {good}. Then $S$ is a linearly independent set over the rationals.
  
\end{lem}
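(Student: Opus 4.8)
The plan is to reduce the statement to the classical theorem of Besicovitch that distinct \emph{reduced} radical monomials over a finite set of distinct primes are linearly independent over $\setQ$, equivalently to the field--degree identity $[\setQ(p_1^{1/m},\dots,p_r^{1/m}):\setQ]=m^r$ for distinct primes $p_1,\dots,p_r$; conditions (i)--(ii) are exactly what is needed to put a finite sub-family of $S$ into that normal form. Since a (possibly infinite) family of reals is $\setQ$-linearly independent precisely when each of its finite sub-families is, it suffices to take a finite subset $\{\beta_1,\dots,\beta_n\}\subseteq S$, say $\beta_k=\prod_p p^{v_{k,p}/m}$, in which only finitely many primes $p_1,\dots,p_r$ appear. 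Writing $v_{k,i}=m c_{k,i}+e_{k,i}$ with $0\le e_{k,i}<m$ gives $\beta_k=a_kM_k$, where $a_k:=\prod_i p_i^{c_{k,i}}\in\setQ_{>0}$ is a nonzero rational and $M_k:=\prod_i p_i^{e_{k,i}/m}$ is a reduced monomial. Condition (ii) says the tuples $(v_{k,i})_i$ are pairwise distinct mod $m$, hence (by unique factorisation) the $M_k$ are pairwise distinct reduced monomials; condition (i) additionally records that $e_{k,i}=0$ iff $v_{k,i}=0$. As scaling by the nonzero rationals $a_k$ neither creates nor destroys a $\setQ$-linear relation, the problem reduces to showing that distinct reduced monomials in $p_1,\dots,p_r$ are $\setQ$-linearly independent.

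For this, note that the $m^r$ reduced monomials $\prod_i p_i^{e_i/m}$, $0\le e_i<m$, all lie in $K:=\setQ(p_1^{1/m},\dots,p_r^{1/m})$ and span it over $\setQ$ (any monomial in the generators $p_i^{1/m}$ reduces, on extracting integer parts from the exponents, to a rational multiple of a reduced monomial, and $K$ is the $\setQ$-span of such monomials since it is generated by algebraic elements). Hence it is enough to prove $[K:\setQ]=m^r$: then the $m^r$ spanning monomials form a $\setQ$-basis of $K$, in particular are linearly independent, and the $M_k$ occurring above are a subset of this basis.

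The main obstacle is exactly the degree identity $[K:\setQ]=m^r$, i.e.\ the genuine irreducibility of the minimal equations at each newly adjoined radical. When $m$ is squarefree this is a short induction on $r$: the base case is Eisenstein applied to $x^m-p_1$, and the inductive step needs $x^m-p_{r+1}$ irreducible over $K_r:=\setQ(p_1^{1/m},\dots,p_r^{1/m})$, which by Capelli's criterion comes down to showing that $p_{r+1}$ is not an $\ell$-th power in $K_r$ for any prime $\ell\mid m$ (the exceptional $-4\gamma^4$ case cannot occur since $K_r\subset\setR$ and $p_{r+1}>0$); this follows by comparing valuations at a prime of $K_r$ above $p_{r+1}$, which is unramified because every prime ramifying in $K_r$ divides $m\,p_1\cdots p_r$. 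For general $m$ one must instead analyse the local behaviour at the primes $p_i$ dividing $m$, where adjoining the remaining radicals can add ramification and the naive Kummer-theory count over $\setQ(\zeta_m)$ is not immediately conclusive; this is the substantive classical input, and in a write-up I would simply cite Besicovitch's theorem (or its refinements due to Mordell) at that step. Everything else --- passing from infinite to finite families, normalising exponents into $[0,m)$, and discarding the rational prefactors $a_k$ --- is bookkeeping.
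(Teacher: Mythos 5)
Your reduction --- pass to a finite subfamily, split each exponent as $v=mc+e$ with $0\le e<m$ so that each element becomes a nonzero rational times a reduced monomial, use condition (ii) to see these reduced monomials are pairwise distinct, and then invoke Besicovitch's theorem on the $\setQ$-linear independence of distinct reduced radical monomials --- is essentially the paper's own proof, which performs the same mod-$m$ normalization (passing from $P$ to $P'$ with all partial degrees $<m$ and nonzero coefficients) and cites the same classical result. Your field-degree framing $[\setQ(p_1^{1/m},\dots,p_r^{1/m}):\setQ]=m^r$ and the squarefree-$m$ sketch are just an equivalent packaging of that cited input, not a different route, so the two arguments coincide in substance.
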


\begin{proof} Suppose, if possible, there exist
  $\lambda_i\in \setQ, \text { for } \ i\in [N]$ such that
  $\sum\limits_{i\leq N}\lambda_i s_i=0$ be a non-trivial relation in
  $S$. We can express this relation as
  $P(p_1^\frac{1}{m},p_2^\frac{1}{m},\dots,p_r^\frac{1}{m})=0$, where
  $P$ is a polynomial, and for all $i\in [r]$, $p_i$ is a prime
  divisor of $(s_j)^m$ for some $j\in [N].$ Since $S$ is a \emph{good} set,
  we can reduce this relation into a relation
  $P'(p_1^\frac{1}{m},p_2^\frac{1}{m},\dots,p_r^\frac{1}{m})=0$ such that
  each coefficient of $P'$ is a non-zero constant multiple of the
  corresponding coefficient of $P$, and the degree of $P'$ with
  respect to $p_i^\frac{1}{m}$ is $<m$ for all $i\in[r].$ But this
  implies, by \autocite[Corollary 1]{BE}, that all coefficients of $P'$
  vanish. Hence all the coefficients of $P$ will also vanish. But this
  is a contradiction to the non-triviality of the relation that we
  started with. This completes the proof.
\end{proof}
\begin{prop}[label={prop:1}]{}{} Let $S=(s_n)$ be an increasing sequence of positive real\\ numbers. Suppose $S$ can be written as a disjoint union of $(S_k)_{k=1}^\infty$ i.e. $S=\cdot\hspace{-11pt}\bigcup\limits_{k\in \setN}S_{k}$ such that the following two conditions hold:\\ \begin{enumerate}[(a)]
  \item \emph{(Density condition):} The relative upper density of $R_K
:= \bigcup\limits _{k\in[K]}S_k$ goes to 1 as $K\to \infty$.
  \item \emph{(Linear independence condition):} Each subsequence $S_k$
of $S$ is linearly independent over $\setQ$.
  \end{enumerate}

  Then $S$ is {strong sweeping out}.
\end{prop}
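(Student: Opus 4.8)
The plan is to deduce \cref{prop:1} from \cref{thm:conze}, so it is enough to verify the hypothesis \eqref{eq:denial}: given $\epsilon\in(0,1)$, $P_0>0$ and a finite $C$, I must produce some $P>P_0$, a dynamical system $(\tilde X,\beta,\m,U_t)$ and a set $\tilde E\in\beta$ with $\m\Big(\{x:\max_{P_0\le N\le P}\tfrac1N\sum_{n\in[N]}\setone_{\tilde E}(U_{s_n}x)\ge1-\epsilon\}\Big)>C\,\m(\tilde E)$. For the system I will take a rotation flow on the circle: $\tilde X=[0,1)$ with Lebesgue measure $\m$ and $U_tx=x+rt$ reduced mod $1$, with $r$ chosen below, and for $\tilde E$ a short arc $[0,\delta)$. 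The ``grid'' is the choice of $r$: via \cref{lem:1} it sorts the sampling times $s_1,\dots,s_P$ into a fine lattice of narrow arcs, block by block in the index $n$, so that the set of admissible starting points splits into many disjoint arcs whose total measure exceeds $C\,\m(\tilde E)$.

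Here is the construction. First use the density condition (a) to fix $K$ with $\overline{d}_S(R_K)>1-\tfrac\epsilon2$, and fix an integer $J>2C$. Since $\overline{d}_S(R_K)>1-\tfrac\epsilon2$ is a $\limsup$, infinitely many $N$ satisfy $\#\{n\in[N]:s_n\in R_K\}>(1-\tfrac\epsilon2)N$, so I may pick thresholds $P_0=P_1<P_2<\dots<P_{J+1}=:P$ with every $P_{j+1}$ ($1\le j\le J$) of this type and $P_j\le\tfrac\epsilon2 P_{j+1}$. Put $D_j:=\{n:P_j<n\le P_{j+1},\ s_n\in R_K\}$; deleting the at most $P_j\le\tfrac\epsilon2 P_{j+1}$ indices $\le P_j$ from the count at $P_{j+1}$ gives $\#D_j>(1-\epsilon)P_{j+1}$. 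The finite set $A:=\{s_n:n\le P,\ s_n\in R_K\}$ is linearly independent over $\setQ$; this is where the linear independence condition (b) is used — in the applications below, $R_K$ itself is linearly independent, by \cref{lem:2}. Apply \cref{lem:1} to $A$ with $Q$ a large multiple of $J$, partitioning $A$ so that the block $\{s_n:n\in D_j\}$ is sent to the subarc $I_{q_j}$ of $(\tfrac{j-1}{J},\tfrac jJ)$ of length $1/Q$ with left endpoint $\tfrac{j-1}{J}$, the remaining pieces being empty. This yields $r$ (necessarily $\ne0$) with $rs_n\in I_{q_j}\pmod 1$ for all $n\in D_j$: along the indices of block $j$ the orbit of the flow stays within $1/Q$ of the grid point $\tfrac{j-1}{J}$, up to the starting shift.

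Now take $\delta=\tfrac1{2J}$ (so $1/Q\le\delta/2$), $\tilde E=[0,\delta)$, and for each $j$ let $G_j:=\{x:x+rs_n\in[0,\delta)\pmod 1\text{ for all }n\in D_j\}$. Since the $rs_n$ with $n\in D_j$ all lie in a single arc of length $1/Q\le\delta/2$, the set $G_j$ contains an arc of length $\delta-1/Q\ge\delta/2$ and is contained in an arc of length $\delta+1/Q<1/J$, both near $-\tfrac{j-1}{J}\pmod 1$; as these points are $1/J$-separated, the $G_j$ are pairwise disjoint. For $x\in G_j$ and $N:=P_{j+1}$ every $n\in D_j$ gives $\setone_{\tilde E}(U_{s_n}x)=1$, hence $\tfrac1N\sum_{n\in[N]}\setone_{\tilde E}(U_{s_n}x)\ge\#D_j/N>1-\epsilon$, so $G_j$ lies in the set in \eqref{eq:denial}. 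That set therefore has measure $\ge\sum_{j=1}^J\m(G_j)\ge\tfrac J2\delta=\tfrac J2\,\m(\tilde E)>C\,\m(\tilde E)$, which is \eqref{eq:denial}; \cref{thm:conze} now gives that $S$ is strong sweeping out.

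The step I expect to be the genuine obstacle is the amplification — getting the constant in front of $\m(\tilde E)$ past an arbitrary $C$ rather than merely $O(1)$. A single concentration (Kronecker's theorem sending all the $s_n$ near $0$) yields only a good set of measure $\approx\m(\tilde E)$, and superposing such configurations rescales numerator and denominator together, so no such scheme beats a fixed constant. The device above is to spend the linear independence of $A$ not on one target but on a grid of $J\sim C$ well-separated targets, one per geometrically growing block of indices, arranged so that when block $j$ closes at time $P_{j+1}$ it already makes up a $(1-\epsilon)$-fraction of the first $P_{j+1}$ samples; the maximum over $N$ in \eqref{eq:denial} then lets each block redeem its own arc, and the arcs, at $1/J$-separated grid points, are disjoint, so the good set is $J$ times larger than $\tilde E$. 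The rest is routine bookkeeping: that the $\epsilon/2$ lost to indices outside $R_K$ and the $\epsilon/2$ geometric slack sum to $\epsilon$, that $Q$ can be chosen large enough to absorb the $1/Q$ errors, and that the rotation flow is an honest measure-preserving flow.
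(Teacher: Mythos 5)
Your overall strategy (deduce \cref{prop:1} from \cref{thm:conze}, build blocks $D_j$ ending at times where $R_K$ has relative frequency $>1-\epsilon$, make the blocks geometrically long so the initial segment is negligible, and amplify past $C$ by producing $J\sim C$ disjoint ``good'' arcs for a single short bad arc) is sound in outline and runs parallel to the paper's choice of intervals $J_i$ via \eqref{eq:interval} and \eqref{eq:205}. But the pivotal step fails: you apply \cref{lem:1} to the set $A=\{s_n : n\le P,\ s_n\in R_K\}$ and justify this with the claim that ``in the applications below, $R_K$ itself is linearly independent, by \cref{lem:2}.'' Hypothesis (b) of \cref{prop:1} only asserts that each piece $S_k$ \emph{separately} is linearly independent over $\setQ$; it says nothing about the union $R_K$, and in the paper's own application (\cref{thm:5}) the union is badly dependent: there $S_k=k^aS_1$, so for instance $2^{a/b}\in S_1$ and $2^a\cdot 2^{a/b}\in S_2$ satisfy the nontrivial rational relation $2^a\cdot\big(2^{a/b}\big)-\big(2^{a+a/b}\big)=0$, already inside $R_2$ (\cref{lem:2} gives independence of $S_1$, hence of each $S_k$, never of $R_K$). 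Consequently Kronecker's theorem cannot produce a single $r$ steering all of $A$, block by block, into your prescribed arcs, and the one-dimensional rotation construction collapses; nothing in the hypotheses lets you repair it on the circle.

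This is exactly the difficulty the paper's grid method is built to handle: the action takes place on $\setT^K$, \cref{lem:1} is invoked once per piece $S_k$ to produce a separate frequency $r_k$, the flow is $U_t(x_1,\dots,x_K)=(x_1+r_1t,\dots,x_K+r_Kt)$, and the bad set is a union of coordinate slabs $E_k$, so that for an index $n$ with $s_n\in S_k$ only the $k$-th coordinate needs to be steered into $I_1\cup I_2$; the $Q^K$ little cubes record, for each block of indices, which target arc each coordinate must hit, and the amplification is obtained by making $\lambda^{(K)}(E)<1/C$ while the maximal set is all of $\setT^K$. Your block/endpoint bookkeeping and the disjoint-arcs amplification would be fine if the full truncated sequence were linearly independent (and indeed then a one-dimensional argument suffices), but under the actual hypothesis (b) you must distribute the independence coordinate-wise as the paper does; as written, your proof has a genuine gap at the application of \cref{lem:1}.
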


\begin{proof} We will apply Theorem \ref{thm:conze}
to prove this result.  So, let $\epsilon\in (0,1), P_0$ and $C>0$ be
arbitrary.  Choose $\rho>1$ such that $\delta:={\rho}{(1-\epsilon)}<1$.
By using condition (a), we can choose $K$ large enough so that
$\overline{d}(R_K)> \delta.$\\ This implies that we can find a
subsequence $(s_{n_j})$ of $(s_n)$ and $N_0\in \setN$ such that
  \begin{equation}
    \label{eq:205} \dfrac{\#\big\{s_n\in R_K:n\leq
n_j\big\}}{n_j}>\delta \text{ for all } j\geq N_0.
  \end{equation} Define $C_K:=S\setminus R_K=\bigcup\limits_{j>K}S_j$.
 
  Let $P$ be a large number which will be determined later. For an
interval $J\subset [P]$ of indices, we consider the average
$\displaystyle\setA_{n\in J}f(U^{s_n}x)$, where $U^t$ is a measure preserving flow on a space
which will be a torus with high enough dimensions for our purposes.
 
  We let $\tilde{S}$ be the truncation of the given sequence $S$ up to
the $P$-th term. Define $\tilde{S_k}$ and $\tilde{C_K}$ be the
corresponding sections of $S_k$ and $C_K$ in $\tilde{S}$ respectively,
i.e.
 \begin{align*} \tilde{S}:= (s_n)_{n\in [P]},&\ \ \ \tilde{S_k}:=
\tilde{S}\cap {S_k},\ \text{for }k\in[K] \\ &\tilde{C_K}:=
\tilde{S}\cap C_K.
    \end{align*}

  The partition $\tilde{S}_1, \tilde{S}_2,\dots,\tilde{S}_K,
\tilde{C_K}$ of $\tilde{S}$ naturally induces a partition of the index
set [P] into $K+1$ set $\mathcal{N}_k, k\leq K+1$ where $\tilde{S}_k$
corresponds to $\mathcal{N}_k$ for $k\in[K]$ and $\tilde{C_K}$
corresponds to $\mathcal{N}_{K+1}.$  The averages $\displaystyle\setA_{n\in J}f(U^{s_n}x)$ can be written as
\begin{equation}
    \label{eq:2} \displaystyle\setA_{n\in J}f(U^{s_n}x)= \frac{1}{\# J}\sum_{k\leq {K+1}}\sum_{n\in
J\cap \mathcal{N}_{k}}f(U^{s_n} {x}).
  \end{equation}

  By hypothesis (b) and Lemma \ref{lem:1}, it follows that each
$\tilde{S}_k$ has the property that if it is partitioned into $Q$
sets, so $\tilde{S}_k= \cup_{q\leq Q} \tilde{S}_{k,q}$, then there is
an integer $r$ so that \\
 
  \begin{equation}
    \label{eq:4} rs\in I_q \text{ mod } 1 \text{ for } s\in
\tilde{S}_{k,q} \text{ and } q\leq Q.
  \end{equation} where \begin{equation}
    \label{eq:5} I_q:= (\frac{q-1}{Q},\frac{q}{Q}).
  \end{equation} The space of action is $K$ dimensional torus
$\setT^K$, subdivided into little $K$ dimensional cubes $C$ of the
form \begin{equation}
    \label{eq:6} C= I_{q(1)}\times I_{q(2)}\times\dots \times I_{q(k)}
\text{ for some } q(k)\leq Q \text{ for } k\leq K.
  \end{equation} At this point, it is useful to introduce the
following vectorial notation to describe these cubes $C$. For a vector
$\mathsf{q}=\big(q(1),q(2),\dots,q(K)\big)$, with $q(k)\leq Q$,
define \begin{equation}
    \label{eq:7} I_{\mathsf{q}}:= I_{q(1)}\times I_{q(2)}\times \dots
\times I_{q(K)}.
  \end{equation}
Since, each component $q(k)$ can take up the values 1,2,\dots ,Q we
divided $\setT^K$ into $Q^K$ cubes.\\ We also consider the ``bad'' set
$E$ defined by \begin{equation}
    \label{eq:8} E:= \displaystyle\bigcup_{k\leq K}
(0,1)\times(0,1)\times\dots \times \underbrace{\big(I_1\cup
I_2\big)}_\text{k-th coordinate}\times\dots \times (0,1).
  \end{equation}

  Defining the set $E_k$ by
  \begin{equation}
    \label{eq:9} E_k:= (0,1)\times (0,1) \times \dots \times
\underbrace{\big(I_1\cup I_2\big)}_\text{k-th coordinate}\times \dots
\times (0,1)
  \end{equation} we have \\
  \begin{equation}
    \label{eq:10} E=\displaystyle\bigcup_{k\leq K}E_k \text{ and }
\lambda^{(K)}(E_k)\leq \frac{2}{Q} \text{ for every } k\leq K
  \end{equation} where $\lambda^{(K)}$ is the Haar-Lebesgue measure on
$\setT^K$.

  By  \eqref{eq:10}, we have
  \begin{equation}
    \label{eq:11} \lambda^K(E)\leq \frac{2K}{Q}.
  \end{equation}

  Observe that by taking $Q$ very large, one can make
  sure that the measure of the bad set
  \begin{equation}
    \label{eq:badset} \lambda^{(K)}\big(E\big)<\frac{1}{C}.
  \end{equation}
  
  Now the idea is to have averages that move each of these little
  cubes into the supports of the set E.  The $2$-dimensional version
  of the process is illustrated in Figure \ref{fig:grid}.
    \begin{figure}[h!]

    \begin{tikzpicture}[scale=.75]
		
  \tkzDefPoints{0/0/O, 10/10/M}
  
  \tkzDefPoints{0/2/f2, 0/4/f4, 0/6/f6, 0/7/f7, 0/8/f8, 0/10/f10,
    2/0/h2, 4/0/h4, 5/0/h5, 6/0/h6, 8/0/h8, 10/0/h10}

  % help drawing purple rectangles
  \tkzDefPoints{4/10/p4, 5/10/p5, 10/7/p7, 10/8/p8}
  % help drawing green rectangles
  \tkzDefPoints{10/2/g102, 2/10/g210}
  % center of arrow end points
  \tkzDefPoints{1.5/1.5/es, 6.5/3.5/bs}
  % arrow starting points
  \tkzDefPoints{-1/-1/ee, 11.5/3.5/be}

  % label points  on axis
  \tkzLabelPoint[left](O){$0$}
  \tkzLabelPoint[left](f2){$\frac2{10}$}
  \tkzLabelPoint[left](f4){$\frac4{10}$}
  \tkzLabelPoint[left](f6){$\frac6{10}$}
  \tkzLabelPoint[left](f7){$\frac7{10}$}
  \tkzLabelPoint[left](f8){$\frac8{10}$}

  \tkzLabelPoint[below](h2){$\frac2{10}$}
  \tkzLabelPoint[below](h4){$\frac4{10}$}
  \tkzLabelPoint[below](h5){$\frac5{10}$}
  \tkzLabelPoint[below](h6){$\frac6{10}$}
  \tkzLabelPoint[below](h8){$\frac8{10}$}

  % green rectangles
  \tkzDrawPolygon[fill=orange!80](O,h10,g102,f2)
  \tkzDrawPolygon[fill=orange!80](O,f10,g210,h2)
  %Violet rectangles
  \tkzDrawPolygon[fill=blue!60](h4,h5,p5,p4)
  \tkzDrawPolygon[fill=blue!60](f7,f8,p8,p7)
  \draw (O) grid (M);

  % draw arrows
  \tkzDrawSegments[->](es,ee bs,be)
  % label ends of arrows
  \tkzLabelPoint[below](ee){bad set $E$}
  \tkzLabelPoint[right](be){$(x_1,x_2)\in B_{6,3}$}
  % draw start of b arrow
  \tkzDrawPoint[red](bs)
\end{tikzpicture}
%%% Local Variables:
%%% mode: latex
%%% TeX-master: "grid_method"
%%% End:
    \caption[-5cm]{Illustration of the 2-dimensional case.
      Here the ``bad set'' E is the orange colored region. Let
      $(x_1,x_2)$ be an arbitrary point (which belongs to $B_{6,3}$ in
      this case). We need to look at an average where
      $r_1s_n\in (\frac{4}{10},\frac{5}{10})$ for all
      $n\in \tilde{S_1}$ and $r_2s_n\in (\frac{7}{10},\frac{8}{10})$
      for all $n\in \tilde{S_2}.$ Then it would give us
      $(x_1,x_2)+(r_1s_n,r_2s_n)\in E$ for all
      $n\in \tilde{S_1}\cup \tilde{S_2}.$ The picture suggests the
      name `grid method' }
    \label{fig:grid}
  \end{figure}
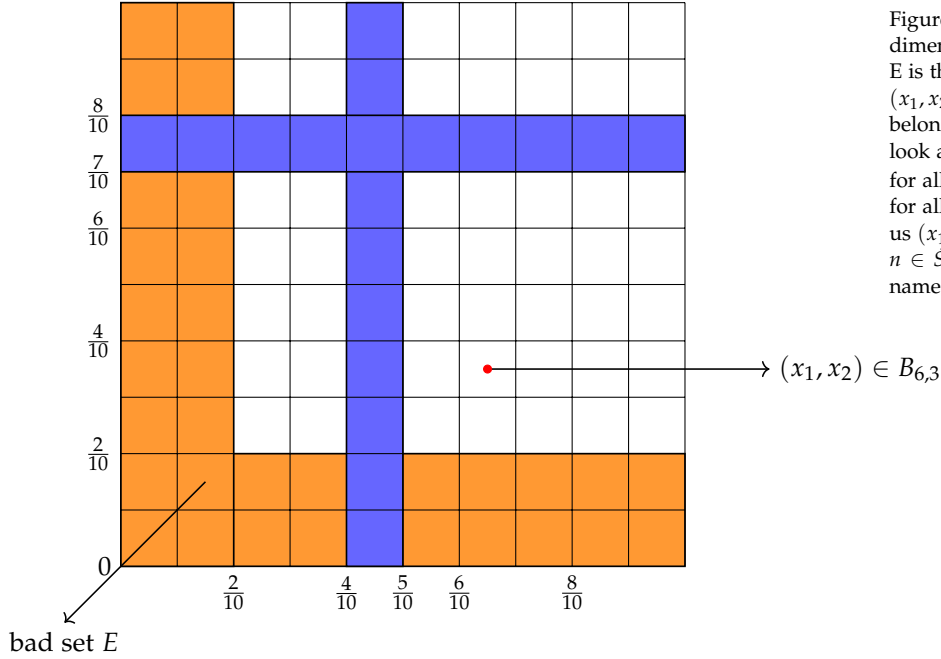

  Since we have
  $Q^K$ little cubes, we need to have $Q^K$ averages $\setA_J$. This means
  we need to have $Q^K$ disjoint intervals $J_i$ of indices. The
  length of these intervals $J_i$ needs to be ``significant''. More
  precisely, we will
  choose $J_i$ so that it satisfies the following two conditions:
  \begin{enumerate}[(i)]
  \item \begin{equation}
      \label{eq:interval} \ J_0=[1,P_0],
\frac{\#J_i}{\sum\limits_{0\leq j\leq i}\#J_{j}}>\frac{1}{\rho} \text
{ for all } i\in [Q^K].
    \end{equation}
  \item If $J_i=[N_{i},N_{i+1}],$ then $(N_{i+1}-N_{i})$ is an element
of the subsequence $(n_j)$ and
  \begin{equation}\label{eq:149}
  (N_{i+1}-N_{i})>N_0 \text{ for all } i\in [Q^K].
  \end{equation}
  \end{enumerate}
 So we have $Q^K$ little cubes $C_i, i\leq Q^K$ and $Q^K$ intervals
$J_i$, $i\leq Q^K$. We match $C_i$ with $J_i$. The large number $P$
can be taken to be the end-point of the last intervals $J_{Q^K}.$\\

  We know that each $C_i$ is of the form \\
  \begin{equation}
    \label{eq:112} C_i= I_{\mathsf{q}_i},
  \end{equation} for some $K$ dimensional vector $\mathsf{q}_i=
\big(q_i(1),q_i(2),\dots q_i(K)\big) $ with $q_i(k)\leq Q$ for every
$k\leq K.$ The interval $J_i$ is partitioned as\\
  \begin{equation}
    \label{eq:13} J_i= \displaystyle\bigcup_{k\leq K+1}(J_i\cap
\mathcal{N}_k).
  \end{equation} For a given $k\leq K$, let us define the set of
indices $\mathcal{N}_{k,q}$ for $q\leq Q$, by \\
  \begin{equation}
    \label{eq:14} \mathcal{N}_{k,q} := \bigcup\limits_{i\leq Q^K,
q_i(k)=q} (J_i\cap \mathcal{N}_k).
  \end{equation} Since, the sets $\tilde{S}_{k,q} := \{s_n | n\in
\mathcal{N}_{k,q}\}$ form a partition of $\tilde{S_k}$, by the
argument of  \eqref{eq:4}, we can find $r_k$ so that \\
  \begin{equation}
    \label{eq:15} r_ks_n\in I_{Q-q} \text{ (mod }1) \text{ for } n\in
\mathcal{N}_{k,q} \text{ and } q\leq Q.
  \end{equation}

  Define the flow $U^t$ on the $K$ dimensional torus $\setT^K$ by \\
  \begin{equation}
    \label{eq:16} U^t(x_1,x_2,\dots, x_K):= (x_1+r_1t,x_2+r_2t,\dots ,
x_K+r_Kt).
  \end{equation} We claim that \\
  \begin{equation}
    \label{eq:171} \Big\{\mathbf{x}|\max_{i\leq Q^K} \displaystyle\setA_{n\in J_i}\setone
_E(U^{s_n}\mathbf{x}) \geq \delta\Big\}= \setT ^K \text{ where }
\textbf{x}=(x_1,x_2,\dots,x_K).
  \end{equation} This will imply that
  \begin{equation}
    \label{eq:17} \Big\{\mathbf{x}|\max_{P_0\leq N\leq P}
\displaystyle\setA_{n\in[N]}\setone
_E(U^{s_n}\mathbf{x})\geq
1-\epsilon\Big\}= \setT ^K.
  \end{equation} To see this implication, let $\mathbf{x}\in \setT^K$
and $\setA_{n\in J_i} \setone _E(U^{s_n}\mathbf{x})\geq \delta$.\\ Assume that
$J_i=[N_i,N_{i+1}]$.\\ Then
    \begin{align*} \setA_{n\in[N_{i+1}]}\setone_E
(U^{s_n}\mathbf{x}) =& \frac{N_i}{N_{i+1}}\frac{1}{N_i}\sum_{n\leq
N_i}\setone_E (U^{s_n}\mathbf{x})+
\frac{N_{i+1}-N_i}{N_{i+1}}\frac{1}{N_{i+1}-N_i}\sum_{n\in
(N_i,N_{i+1}]}\setone_E (U^{s_n}\mathbf{x})
\intertext{ since $ \setA_{n\in J_i} \setone _E(U^{s_n}\mathbf{x})\geq \delta$,} \geq &
\frac{N_{i+1}-N_i}{N_{i+1}}\delta \ \ \ \  \\\intertext{ by  \eqref{eq:interval} } \geq& \frac{\delta}{\rho}\ \ \
\\ = & (1-\epsilon)
    \end{align*} This proves  \eqref{eq:17}.\\ Now let us prove our
claim  \eqref{eq:171}.\\ Let $\mathbf{x}\in C_i$ and consider the
average $\setA_{n\in J_i} \setone _E(U^{s_n}\mathbf{x})$. \\ We have
\begin{align*} \setA_{n\in J_i} \setone _E(U^{s_n}\mathbf{x}) &=\frac{1}{\#
J_i}\sum_{k\leq {K+1}}\sum_{n\in J_i\cap
\mathcal{N}_{k}}\setone_E(U^{s_n} {\mathbf{x}}) \intertext{by  \eqref{eq:16}} &=\frac{1}{\#
J_i}\sum_{k\leq {K+1}}\sum_{n\in J_i\cap
\mathcal{N}_{k}}\setone_E(x_1+r_1s_n,x_2+r_2s_n,\dots ,x_K+r_Ks_n).
    \end{align*}
If we can prove that for each $k\in[K]$,
  \begin{equation}
    \label{eq:20} (x_1+r_1s_n,x_2+r_2s_n,\dots, x_k+r_ks_n,\dots,
x_K+r_Ks_n)\in E_k \text{ if }n\in J_i\cap \mathcal{N}_k
  \end{equation} then we will have\\
  \begin{align*} \setA_{n\in J_i} \setone _E(U^{s_n}\mathbf{x})&\geq \frac{1}{\#
J_i}\sum_{k\leq {K}} \#\big(J_i\cap \mathcal{N}_{k}\big) \intertext{Using the notation $J_i=[N_i,N_{i+1}]$,} &=
\dfrac{\#\{ s_n\in R_K: N_i\leq n \leq N_{i+1}\}}{N_{i+1}-N_i}\ \ \
\\ &\geq \delta
    \end{align*} proving  \eqref{eq:171}.\\ For the last inequality, we used  \eqref{eq:205} and \eqref{eq:149}.\\ We are
yet to prove  \eqref{eq:20}.\\ Since $\mathbf{x}\in C_i= I_{\mathsf{q}}$
we have $x_k\in I_{q_i(k)}$ for every $k$. By the definition of $r_k$
in  \eqref{eq:15} we have $r_ks_n\in I_{Q-q_i(k)}$ if $n\in J_i\cap
\mathcal{N}_k$.\\ It follows that \\
  \begin{equation}
    \label{eq:22} x_k+r_ks_n\in I_{q_{i(k)}}+I_{Q-q_i(k)} \text{ if }
n\in J_i\cap\mathcal{N}_k.
  \end{equation} Since $I_{q_{i(k)}}+I_{Q-q_{i(k)}}\subset I_1\cup
I_2$, we get \\
  \begin{equation}
    \label{eq:23} x_k+r_ks_n\in I_1\cup I_2 \text{ if } n\in J_i\cap
\mathcal{N}_k.
  \end{equation} By the definition of $E_k$ in  \eqref{eq:9}, this
implies that \\
  \begin{equation}
    \label{eq:24} (x_1+r_1s_n,x_2+r_2s_n,\dots x_k+r_ks_n,\dots,
x_K+r_Ks_n)\in E_k
  \end{equation} as claimed.\\

  Thus, what we have so far is the following:

  for every $C>0,\epsilon \in (0,1)$ and $P_0\in \setN$, there exists
an integer $P>P_0$ and a set $E$ in the dynamical system
$(\setT^K,\Sigma^{(K)},\lambda^{(K)},U^t)$ such that\\
  \begin{equation*}
    \label{eq:125} \lambda^{(K)}\Big(\max_{P_0\geq N\geq P}
\setA_{n\in [N]}\setone_E(U^{s_n}\mathbf{x})\geq 1-\epsilon
\Big)=1\geq C\lambda^{(K)}(E).
  \end{equation*} The right hand side of the above inequality is true by
 \eqref{eq:badset}.\\ Now Theorem \ref{thm:conze} finishes the proof.
\end{proof}
We need the following definition to prove Theorem \ref{thm:5}.
\begin{defn}{}{}
Let $b$ be a
fixed integer $\geq 2$. A positive integer $n$ is said to be
\emph{$b$-free} if $n$ is not divisible by $b$-th power of any integer
>1.
\end{defn}

\begin{proof}[Proof of Theorem \ref{thm:5}] If we can prove that the sequence
$(n^\alpha)$ is {strong sweeping out}, then part (b) of remark (in page 4) will give us the desired result.\\ To prove that the sequence
$(n^\alpha)$
{strong sweeping out}, we want to apply Proposition \ref{prop:1} with
$S=(n^{\alpha})$.\\ Observe that the set $S_0:=\{s\in S: s \text{ is
an integer}\}$ has relative density $0$ in the set $S$. Hence we can
safely delete $S_0$ from $S$ and rename the modified set as $S$.  Let
$\alpha=\frac{a}{b} \text{ with gcd}(a,b)=1 \text{ and } b\geq 2$.\\
By applying the fundamental theorem of arithmetic, one can easily prove
that every positive integer $n$ can be uniquely written as
$n={j^b}\tilde{n}$, where $j,\tilde{n}\in \setN$, $\tilde{n}$ is
\emph{$b$-free}.  This observation suggests us that we can partition
the set $S$ as follows:
\begin{lem}{}{}
 $S=\cdot\hspace{-6pt}\cup_{k\in \setN}S_{k}$, where we define
  \begin{equation}
    \label{eq:29} S _1:= \{n^\frac{a}{b}: n \text{ is }
\emph{$b$-free}\} \text{ and } S_k:=k^aS_1=\{k^a n^\frac{a}{b}:n
\text{ is } \emph{$b$-free}\}.
  \end{equation}
\end{lem}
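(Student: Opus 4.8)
The plan is to read everything off the unique factorization theorem, using the decomposition $n=j^{b}\tilde{n}$ recorded just above the statement. First I would make the uniqueness of that decomposition explicit: writing $n=\prod_{p}p^{\mu_{p}}$ and dividing each exponent by $b$, there are unique integers $f_{p}\ge 0$ and $e_{p}\in\{0,1,\dots ,b-1\}$ with $\mu_{p}=bf_{p}+e_{p}$; then $j:=\prod_{p}p^{f_{p}}$ is the largest integer whose $b$-th power divides $n$, and $\tilde{n}:=n/j^{b}=\prod_{p}p^{e_{p}}$ is $b$-free, and any other factorization $n=(j')^{b}\tilde{n}'$ with $\tilde{n}'$ $b$-free forces $(j')^{b}=j^{b}$ and $\tilde{n}'=\tilde{n}$, since the remainders $e_{p}$ are uniquely determined.

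Second I would check $S\subseteq\bigcup_{k\in\setN}S_{k}$: given $s=n^{a/b}\in S$, factor $n=j^{b}\tilde{n}$, so $s=(j^{b}\tilde{n})^{a/b}=j^{a}\,\tilde{n}^{a/b}$; since $\tilde{n}$ is $b$-free this says $\tilde{n}^{a/b}\in S_{1}$, hence $s\in S_{j}$. Since $S_{0}$ has already been removed, one should note that $n^{a/b}$ is an integer exactly when $n$ is a perfect $b$-th power — here $\gcd(a,b)=1$ is what makes $n^{a/b}\in\setN$ equivalent to $n^{1/b}\in\setN$ — i.e.\ exactly when $\tilde{n}=1$; so after the deletion the surviving elements are the $j^{a}\tilde{n}^{a/b}$ with $\tilde{n}$ $b$-free and $\tilde{n}>1$, and the formula for $S_{1}$ should be read with that convention, the single point $1$ being irrelevant to the relative-density computations to follow. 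The reverse inclusion is immediate, since $k^{a}n^{a/b}=(k^{b}n)^{a/b}$ with $k^{b}n\in\setN$.

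Third, and this is the only line that genuinely needs an argument, I would verify that the $S_{k}$ are pairwise disjoint. Suppose $k^{a}n^{a/b}=\ell^{a}m^{a/b}$ with $n,m$ both $b$-free. Raising to the $b$-th power gives $(k^{b}n)^{a}=(\ell^{b}m)^{a}$, and taking positive $a$-th roots, $k^{b}n=\ell^{b}m$. But $k^{b}n$ is \emph{already} in the normal form ``(perfect $b$-th power)$\cdot$($b$-free factor)'': each prime exponent of $k^{b}n$ is $bf_{p}+e_{p}$ with $e_{p}\in\{0,\dots ,b-1\}$ the exponent coming from $n$. By the uniqueness established above, $k^{b}=\ell^{b}$ and $n=m$, hence $k=\ell$. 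Thus $S_{j}\cap S_{k}=\emptyset$ for $j\neq k$, giving the claimed partition of $S$ into the pairwise disjoint pieces $S_{k}$. I do not expect a real obstacle here: the content is elementary, and the only points to be careful about are the uniqueness step (which is where unique factorization actually does the work) and the bookkeeping around having deleted the integer part $S_{0}$, which affects only the trivial element $1$.
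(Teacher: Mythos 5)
Your proposal is correct and is essentially the paper's argument: both reduce a coincidence $k^{a}n^{a/b}=\ell^{a}m^{a/b}$ (with $n,m$ $b$-free) to the integer identity $k^{b}n=\ell^{b}m$ and then invoke unique factorization, the paper via a ``without loss of generality $\gcd(k,\ell)=1$'' divisibility argument and you via uniqueness of the decomposition $n=j^{b}\tilde{n}$, which are interchangeable. Your bookkeeping remark about the deleted integer elements (the case $\tilde{n}=1$) is a point the paper passes over silently, but it changes nothing of substance.
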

\begin{proof}
 It can be easily seen that $S=\cup_{k\in
\setN}S_{k}.$\\ To check the disjointness let, if possible, there
exists $s\in S_{k_1}\cap S_{k_2}, k_1\neq k_2$. Then there exist
\emph{$b$-free} integers $n_1,n_2$ such that\\
  \begin{equation}
    \label{eq:26} s=k{_i}^a{n_i^\frac{a}{b}}, i=1,2.
  \end{equation} Without loss of any generality, we can assume
gcd$(k_1,k_2)=1$.\\  \eqref{eq:26} implies that \\
  \begin{equation}
    \label{eq:30} k_1^{b}n_1=k_2^{b}n_2.
  \end{equation} gcd$(k_1,k_2)=1\implies \text{
gcd}(k_1^b,k_2^b)=1$.\\ Hence,  \eqref{eq:30} can hold only if
$\mathrel{k_1^b|n_2}$ and $\mathrel{k_2^b|n_1}.$ Since $n_2$ is
\emph{$b$-free}, $k_1^b$ can divide $n_2$ only when $k_1=1.$
Similarly, $k_2^b$ can divide $n_1$ only when $k_2=1.$ But this
contradicts our hypothesis $k_1 \neq k_2$.
\end{proof} 
  We will be done if we can show that each $S_{k}$ satisfies the
hypothesis of Proposition \ref{prop:1}.  We need to show that the upper density of $R_K := \bigcup\limits
_{k\in[K]}S_k$ goes to 1 as $K\to \infty$.\\ In fact, we will show
that, the lower density of $R_K := \bigcup\limits _{k\in[K]}S_k$ goes
to 1 as $K\to \infty$.  Let us look at the complement $C_{K}:=
S\setminus R_{K}.$ It will be enough to show that the upper density of
$C_{K}$ goes to $0$ as $K\to \infty.$\\ An element $n^\alpha$ of $S$
belongs to $C_{K}$ only when $n^{\alpha}\in k^a S$ for some
$k>K$. This means that there is $k>K$ so that $k^b| n$.\\ Hence, we
have\\
  \begin{equation}
    \label{eq:37} C_K\subset \bigcup\limits_{k>K}k^a
S=\bigcup\limits_{k>K}k^a S_1=\bigcup\limits_{k>K} S_k.
  \end{equation} To see that the first equality of  \eqref{eq:37} is
true, let $ x\in \bigcup\limits_{k>K}k^a S $. Then we must have that
$x=k_1^a n^\alpha$ for some $k_1>K.$ Write $n=j^b \tilde{n}$ where
$\tilde{n}$ is $b$-free. Then $x=(jk_1)^a\tilde{n}^\alpha.$ Since,
$jk_1>K$, $x\in \bigcup\limits_{k>K}k^a S_1.$ Other containment is
obvious. We want to show that the upper density of
$A_K:=\bigcup\limits_{k>K}S_k$ goes to $0$ as $K\to \infty.$ This
would imply that the upper density of $C_K$ also goes to $0$, implying
that the lower density of its complement $R_K$ goes to 1. We will see
that we even have a rate of convergence of these lower and upper
densities in terms of $K$.\\ Let us see why the upper density of $A_K$
goes to $0$ as $K\to \infty$. \\ We have,
  \begin{align*} \#S(N)&:=\{n^\alpha\leq N:n\in \setN\}\\
&=\lfloor{N^\frac{a}{b}}\rfloor\\ \#S_k(N)&:=\{(k^bn)^\alpha\leq N:n\in \setN\}\\
&=\big\lfloor{\frac{N^\frac{a}{b}}{k^b}}\big\rfloor
  \end{align*} Hence $\dfrac{\#S_k(N)}{\#S(N)}\leq \frac{1}{k^b}$.\\
We now have, 
\begin{align*}
\dfrac{\#A_K(N)}{\#S(N)}&=\dfrac{\#\Big(\bigcup\limits_{k>K}
S_k(N)\Big)}{\#S(N)}\\ &\leq
\dfrac{\sum\limits_{k>K}\#S_k(N)}{\#S_N}\\ &\leq
\sum\limits_{k>K}\dfrac{1}{k^b}
    \end{align*} Since, $\sum\limits_{k\geq K}\dfrac{1}{k^b}\to 0$ as
$K\to \infty$, we must have that the upper density of $A_K$ goes to
$0$ as $K\to \infty$.\\ This proves that $S_k$ satisfies the density
condition.

  To satisfy condition (b), first observe that every element $s_j$ of
$S_1$ can be thought of as a product of the form $p_1^\frac{c_1
a}{b}p_2^\frac{c_2a}{b}\dots p_k^\frac{c_ka}{b}$ where $c_i$ depend on
$p_i$ and $s_j$ for $i\in[k]$. We will apply Lemma \ref{lem:2} on the set
$S_1$ with $m=b.$\\ To check (i), note that since $S_1$ consist of
\emph{$b$-free} elements, we have $0\leq c_i<b$. Assume $c_i>0$. By
assumption we have gcd$(a,b)=1.$ This implies that $ac_i\neq 0$ (mod
$b$) for $i\in [k].$ \\ For (ii), let if possible,
$s_1=p_1^\frac{c_1a}{b}p_2^\frac{c_2a}{b}\dots p_k^\frac{c_ka}{b}$ and
$s_2=p_1^\frac{d_1a}{b}p_2^\frac{d_2a}{b}\dots p_k^\frac{d_ka}{b}$ be
two distinct elements of $S_1$ satisfying $ac_i=ad_i$ (mod $b$) for
all $i\in [k].$ This implies $a(c_i-d_i)=0$ (mod $b$) for all
$i\in[k]\implies (c_i-d_i)=0$ (mod $b$) for all $i\in[k].$ But this is
not possible, since elements of $S_1$ are \emph{$b$-free}, we must have
$c_i,d_i<b$ for all $i\in[k].$\\ Hence we conclude that $S_1$ is a
\emph{good set}. Since $S_k$ is an integer multiple of $S_1$, so $S_k$
is also linearly independent over $\setQ$.  Thus condition (b) is also
satisfied. This finishes the proof.
\end{proof}
 
Now we will prove Theorem \ref{thm:4} with the help of Proposition \ref{prop:1}. As before, it will be sufficient to show that the sequence $S=(s_n)$ obtained
by rearranging the elements of the set
$\{n_1^{\alpha_1}n_2^{\alpha_2}\dots n_l^{\alpha_l} :n_i\in\setN
\text{ for all } i\in [l]\}$ in an increasing order is {strong sweeping out}.\\
 
\begin{proof}[Proof of Theorem \ref{thm:4}]

  Define
  $$S_1:=\{n_1^{\alpha_1}n_2^{\alpha_2}\dots n_l^{\alpha_l} :n_i\in\setN
  \text{ such that } n_i \text{ is $b_i$-free} \text{ for all }
  i\in[l]\}.$$

  First, let us see that the elements of $S_1$ can be written
  uniquely.  Suppose not. Then there are two different representation
  of an element.
  \begin{equation}
    \label{eq:51} n_1^{\alpha_1}n_2^{\alpha_2}\dots
    n_l^{\alpha_l}=m_1^{\alpha_1}m_2^{\alpha_2}\dots m_l^{\alpha_l}
  \end{equation}

  Let us introduce the following notation:

  \begin{equation}
    \label{eq:59} \overline{b_i}=b_1b_2\dots b_{i-1}b_{i+1}b_l.
  \end{equation}
 
   \eqref{eq:51} implies that
  \begin{equation}
    \label{eq:52} n_1^{a_1\overline{b_1}}n_2^{a_2\overline{b_2}}\dots
    n_l^{a_l\overline{b_l}}=m_1^{a_1\overline{b_1}}m_2^{b_2\overline{b_2}}\dots
    m_l^{a_l\overline{b_l}}
  \end{equation}

  After some cancellation if needed, we can assume that
  $\gcd(n_i,m_i)=1.$ There exists $i$ for which $n_i>1$, otherwise
  nothing to prove. Without loss of generality, let us assume that
  $n_1>1$ and $p$ is a prime factor of $n_1$. Let $p^{c_i}$ be the
  highest power of $p$ that divides $m_i$ or $n_i$ for $i\in[l].$ Let
  us focus on the power of $p$ in equation \eqref{eq:52}.  We have \begin{align*}
      c_1 a_1 \overline{b_1}&=\pm c_2 a_2 \overline{b_2}\pm c_3 a_3
      \overline{b_3}\pm \dots \pm c_l a_l
      \overline{b_l}. \intertext{This implies that}
 c_1a_1\overline{b_1}&=0\ \ (\text{mod}\ b_1).    \end{align*} Since, $(a_1\overline{b_1},b_1)=1$, and $c_1<b_1$,
  we have $c_1=0.$ This is a contradiction!  This proves that
  $n_i=1=m_i$ for all $i\in[l].$ This finishes the proof that every
  element of $S_1$ has a unique representation.  Now, let $(e_k)$ be
  the sequence obtained by arranging the elements of the set
  $\Big\{\prod_{i\leq l} n_i^{b_i}, n_i\in \setN \Big\}$ in an
  increasing order.  Define
  $S_k:=e_kS_1=\{e_kn_1^{\alpha_1}n_2^{\alpha_2}\dots n_l^{\alpha_l}
  :n_i\in\setN \text{ and } n_i \text{ is $b_i$-free} \text{ for all }
  i\in[l]\}.$  Next we will prove that $S =
  \cdot\hspace{-9pt}\bigcup\limits_{k\in \setN} \displaystyle S_k$. 
  One can easily check that $S=\bigcup\limits_{k\in \setN} S_k.$  To
  check the disjointness, we will use a similar argument as above.  Suppose
  on the contrary, there exists $k_1\neq k_2$, such that
  $S_{k_1}=S_{k_2}.$  So we have 

  \begin{equation}
    \label{eq:56} e_{k_1} n_1^{\alpha_1}n_2^{\alpha_2}\dots
    n_l^{\alpha_l}=e_{k_2} m_1^{\alpha_1}m_2^{\alpha_2}\dots
    m_l^{\alpha_l}
  \end{equation} This implies, after using the same notation as
  before  
  \begin{equation}
    \label{eq:50} e_{k_1}^{b_1b_2\dots
      b_l}n_1^{a_1\overline{b_1}}n_2^{a_2\overline{b_2}}\dots
    n_l^{a_l\overline{b_l}}=e_{k_2}^{b_1b_2\dots
      b_l}m_1^{a_1\overline{b_1}}m_2^{b_2\overline{b_2}}\dots
    m_l^{a_l\overline{b_l}}
  \end{equation} Assume without loss of generality that $e_{k_1}>1$,
  $\gcd(e_{k_1},e_{k_2})=1$ and $\gcd(n_i,n_j)=1$ for $i\neq j.$  Let $p$
  be a prime which divides $e_{k_1}$.  Assume $p^{c_i}$ be the highest
  power of $p$ that divides $m_i$ or $n_i$ for $i=1,2,\dots l$ and $p^r$
  be the highest power of $p$ that divides $e_{k_1}$.  Now, we will
  focus on the power of $p$ in  \eqref{eq:50}.  We have,
    \begin{align*} rb_1b_2\dots b_l&=\pm c_1 a_1 \overline{b_1}\pm
      c_2 a_2 \overline{b_2}\pm c_3 a_3 \overline{b_3}\pm \dots \pm c_l a_l
      \overline{b_l}.
\intertext{This implies that for all $i\in[l]$ we have,} c_ia_i\overline{b_i}&=0\ (\text{mod } b_i).
    \end{align*} Since, $(a_i\overline{b_i},b_i)=1$, and $c_i<b_i$,
  we get $c_i=0\text{ for all }i\in[l].$  This is a contradiction to
  the assumption that $p | e_{k_1}.$ Hence we conclude that $e_{k_1}=1.$
  Similarly, $e_{k_2}=1$. This finishes the proof of disjointness of
  $S_k$'s.\\  Now we will check the density condition.  For any sequence
  $A$, we will use the notation $A(N)$ to denote the set $\{a\in A:a\leq
  N\}$.  As before, we will show that the lower density of $B_K :=
  \bigcup\limits _{k\in[K]}S_k$ goes to 1 as $K\to \infty$. Let us look at the complement $C_K:=S
  \setminus B_K$. We want to show that the upper density of $C_K$ goes
  to $0$ as $K \to \infty $.  Since the sets $C_K$ are decreasing, it
  will be sufficient to show that the upper density of the subsequence
  $C_M$ converges to $0$ as $M\to \infty$ where $M={e_K}^{a_1+a_2+\dots
    +a_l}$. But this is equivalent to saying that upper density of $C_M$ converges to $0$ as
  $K\to \infty.$ This will follow from the following two lemmas.
  \begin{lem}[label={lem:3}]{}{} 
        $C_M(N)\subset\bigcup\limits_{i\in[l]}\bigcup\limits_{L\geq
          e_K}D_i^L(N)$  where \\
        $D_i^L(N):=\big\{n_1^{\alpha_1}n_2^{\alpha_2}\dots
        n_{i-1}^{\alpha_{i-1}} (L^{b_i}n_i)^{\alpha_i}
        n_{i+1}^{\alpha_{i+1}} \dots n_l^{\alpha_l}\leq N: n_j \in
        \setN \text{ for }j\in[l] \big\}.$
  \end{lem}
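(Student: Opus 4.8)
The plan is to read off, from the canonical $b_i$-free factorisation of an element $x\in C_M(N)$, a multiplier that is forced to be large because $x$ has been deleted from $B_M$, and then to peel off one oversized factor of that multiplier so as to display $x$ inside one of the sets $T_i^L(N)$ with $L\ge e_K$.

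First I would fix $x\in C_M(N)$ and write $x=\prod_{i\in[l]}n_i^{\alpha_i}$ with $n_i\in\setN$. The fundamental theorem of arithmetic gives the unique factorisation $n_i=g_i^{b_i}\tilde n_i$ with each $\tilde n_i$ being $b_i$-free, and since $n_i^{\alpha_i}=g_i^{a_i}\tilde n_i^{\alpha_i}$ this yields $x=e\cdot s$ with $e:=\prod_{i\in[l]}g_i^{a_i}$ one of the multipliers $e_k$ and $s:=\prod_{i\in[l]}\tilde n_i^{\alpha_i}\in S_1$. By the disjointness of the sets $S_k$ proved above, $x$ lies in $S_{k_0}$ precisely for the index $k_0$ with $e_{k_0}=e$, and since $x\notin B_M=\bigcup_{k\le M}S_k$ we get $k_0>M$. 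As the $e_k$ enumerate a subset of $\setN$ in strictly increasing order we have $e_j\ge j$ for every $j$, so $e=e_{k_0}\ge e_{M+1}>e_M\ge M=e_K^{a_1+\cdots+a_l}=\prod_{i\in[l]}e_K^{a_i}$.

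If every $g_i$ were $\le e_K$ we would have $e\le\prod_{i\in[l]}e_K^{a_i}$, contradicting the inequality just obtained; hence there is an index $i_0\in[l]$ with $g_{i_0}\ge e_K$. Put $L:=g_{i_0}$. Regrouping, $x=L^{a_{i_0}}\cdot\tilde n_{i_0}^{\alpha_{i_0}}\prod_{j\ne i_0}(g_j^{b_j}\tilde n_j)^{\alpha_j}=(L^{b_{i_0}}\tilde n_{i_0})^{\alpha_{i_0}}\prod_{j\ne i_0}(g_j^{b_j}\tilde n_j)^{\alpha_j}$, which is exactly of the form occurring in the definition of $T_{i_0}^L$ (take $n_{i_0}=\tilde n_{i_0}$ and $n_j=g_j^{b_j}\tilde n_j\in\setN$ for $j\ne i_0$), and $x\le N$ because $x\in C_M(N)$; so $x\in T_{i_0}^L(N)$ with $L\ge e_K$. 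Since $x$ was an arbitrary element of $C_M(N)$, this proves $C_M(N)\subseteq\bigcup_{i\in[l]}\bigcup_{L\ge e_K}T_i^L(N)$.

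I do not expect a genuine obstacle here: once the $b_i$-free factorisation is in place the argument is pure bookkeeping, and it parallels the $l=1$ computation already carried out in the proof of \cref{thm:5}. The one step worth isolating is the numerical one --- the passage from ``$x$ lies outside $B_M$'' to ``some $g_{i_0}\ge e_K$'' --- since this is exactly where the choice $M=e_K^{a_1+\cdots+a_l}$ is used, via the two elementary facts that the $M$-th smallest element of a subset of $\setN$ is at least $M$, and that a product $\prod_i g_i^{a_i}$ which exceeds $\prod_i e_K^{a_i}$ must contain a factor $g_{i_0}\ge e_K$. The remaining inputs --- uniqueness of the writing $x=e\cdot s$ (from the partition $S=\bigcup_kS_k$ together with its disjointness) and the membership of the regrouped product in $S$ --- are immediate from earlier parts of the proof of \cref{thm:4}.
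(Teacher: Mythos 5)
Your proof is correct and takes essentially the same route as the paper's: factor each $n_i=g_i^{b_i}\tilde n_i$ into its $b_i$-power and $b_i$-free parts, use $x\notin B_M$ together with the increasing enumeration of the multipliers (so $e_k\ge k$) to force the multiplier past $M$, deduce that some $g_{i_0}\ge e_K$, and exhibit $x\in T_{i_0}^{L}(N)$ with $L=g_{i_0}$. The only differences are cosmetic: you make explicit the step $e_k\ge k$ that the paper leaves implicit, and you run the size comparison on the multiplier $\prod_i g_i^{a_i}$ with $M=e_K^{a_1+\dots+a_l}$ (matching the normalization stated in the text before the lemma), whereas the paper's two-line proof writes the same contradiction with the exponents $b_i$.
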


  \begin{proof}
    Let $s=n_1^{\alpha_1}n_2^{\alpha_2}\dots n_l^{\alpha_l}$ belongs
    to $C_{M}(N)$.  Write $n_i=j_i^{b_i}\tilde{n_i}$ where
    $\tilde{n_i}$ is \emph{$b_i$-free}
    for $i\in[l].$ Then we must have $j_i\geq e_K$ for some $i\in [l].$\\
    On the contrary, suppose we have, $j_i<e_K$ for all $i$. Then
    $j_1^{b_1}j_2^{b_2}\dots j_l^{b_l}<{e_K}^{b_1+b_2+\dots+b_l}=M$.
    But this is a contradiction to our hypothesis that $s\in C_M(N).$
  \end{proof}

  \begin{lem}[label={lem:4}]{}{}
    For every fixed $L$ and $i$
    \begin{equation}
      \label{eq:63} \dfrac{\#D_i^L(N)}{\#S(N)}\leq \frac{1}{L^{b_i}}.
    \end{equation}
  \end{lem}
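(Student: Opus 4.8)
The plan is to reduce the estimate to a one-variable counting fact. Write $\alpha_i=a_i/b_i$; the identity $(L^{b_i}n_i)^{\alpha_i}=L^{a_i}n_i^{\alpha_i}$ shows that each element of $T_i^L$ is exactly $L^{a_i}$ times an element of $S$, and in fact $v\mapsto v/L^{a_i}$ is a bijection from $T_i^L(N)$ onto $S(N/L^{a_i})$: it is injective, its image lies in $S(N/L^{a_i})$ because $v/L^{a_i}=n_1^{\alpha_1}\cdots n_l^{\alpha_l}\le N/L^{a_i}$, and it is onto since any $s=n_1^{\alpha_1}\cdots n_l^{\alpha_l}\in S$ with $s\le N/L^{a_i}$ satisfies $L^{a_i}s=n_1^{\alpha_1}\cdots(L^{b_i}n_i)^{\alpha_i}\cdots n_l^{\alpha_l}\in T_i^L(N)$. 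Hence $\#T_i^L(N)=\#S(N/L^{a_i})$, so it suffices to prove
\[
\#S\!\left(N/L^{a_i}\right)\le \frac{1}{L^{b_i}}\,\#S(N)\qquad\text{for every }N.
\]

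The prototype is the one-variable case $S=(n^{a/b})$, where $\#S(N)=\lfloor N^{b/a}\rfloor$ and, using the elementary identity $\lfloor Y/k\rfloor=\lfloor\lfloor Y\rfloor/k\rfloor$ for $k\in\setN$,
\[
\#S\!\left(N/L^{a}\right)=\Bigl\lfloor\bigl(N/L^{a}\bigr)^{b/a}\Bigr\rfloor=\Bigl\lfloor\lfloor N^{b/a}\rfloor/L^{b}\Bigr\rfloor\le\tfrac{1}{L^{b}}\,\lfloor N^{b/a}\rfloor ;
\]
this is exactly where the full factor $L^{-b}$ comes from. For general $l$ I would push this computation through the partition $S=\bigsqcup_k S_k$ constructed in the proof of \cref{thm:4}: every $s\in S$ has a unique normal form $s=e_k t$ with $t$ in the base copy $S_1$, and $S_1$ is in bijection with tuples $(n_1,\dots,n_l)$ in which $n_j$ is $b_j$-free for each $j$ (the unique-representation property established there; see also \cref{lem:2}). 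The idea is then that passing to $N/L^{a_i}$ only rescales the $e_k$-factor of the normal form, so that $\#S(N/L^{a_i})=\sum_k\#S_1(N/(L^{a_i}e_k))$ while $\#S(N)=\sum_k\#S_1(N/e_k)$, and one reduces to a statement about the uniquely parametrised family $S_1$, where the floor bound $\lfloor M/L^{b_i}\rfloor\le M/L^{b_i}$ on an initial segment $\{1,\dots,M\}$ of $\setN$ can be applied coordinatewise and then summed (first over that coordinate, then over the remaining $b_j$-free coordinates, then over $k$).

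The main obstacle — and the only delicate point — is that $\#S(N)$ and $\#T_i^L(N)$ count \emph{distinct real numbers}, whereas the map $(n_1,\dots,n_l)\mapsto\prod_j n_j^{\alpha_j}$ is highly non-injective. Consequently the naive ``freeze the other exponents and count the multiples of $L^{b_i}$'' argument is not valid for arbitrary representations and must be carried out on the unique normal forms produced in the proof of \cref{thm:4}; in particular one must check that the scaling $s\mapsto L^{a_i}s$ (used in the reduction above) sends normal forms to normal forms, so that it is compatible with the parametrisation of $S$ by $\{e_k\}$ together with the $b_j$-free tuples of $S_1$. Granting that compatibility and the unique $b_j$-free representation of $S_1$, the elementary floor inequality then supplies the constant $1/L^{b_i}$ exactly, precisely as in the one-variable prototype.
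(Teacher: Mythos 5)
Your opening reduction is correct and tidy: since $(L^{b_i}n_i)^{\alpha_i}=L^{a_i}n_i^{\alpha_i}$, the set $T_i^L(N)$ is exactly $L^{a_i}\cdot S(N/L^{a_i})$, so $\#T_i^L(N)=\#S(N/L^{a_i})$ and the lemma becomes the two-scale inequality $\#S(N/L^{a_i})\le L^{-b_i}\,\#S(N)$. Your one-variable computation is also fine, because there $\#S(N)=\lfloor N^{b/a}\rfloor$ counts an unconstrained integer variable. But the reformulation is essentially a restatement of the lemma, and the multi-variable step, which is where all the work lies, has a genuine gap as you have sketched it.

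Concretely: you propose $\#S(N)=\sum_k\#S_1(N/e_k)$ and $\#S(N/L^{a_i})=\sum_k\#S_1(N/(L^{a_i}e_k))$ (fine, given the disjoint decomposition $S=\bigcup_k e_kS_1$), and then to conclude termwise by a ``coordinatewise floor bound'' inside $S_1$. Inside $S_1$, however, the $i$-th coordinate ranges over $b_i$-free integers, not over an initial segment of $\setN$, and the termwise inequality you would need, $\#S_1(M/L^{a_i})\le L^{-b_i}\#S_1(M)$, is false in general. Already for $l=1$ and $\alpha=a/b$ one has $\#S_1(M)=Q_b(M^{b/a})$, where $Q_b(X)$ counts $b$-free integers up to $X$, and $Q_b(X/L^{b})\le L^{-b}Q_b(X)$ fails: for $b=2$, $L=2$, $X=4$ we get $Q_2(1)=1>\frac{3}{4}=Q_2(4)/4$ (equivalently, with $\alpha=3/2$, $M=8$, $L=2$: $\#S_1(M/L^{a})=\#S_1(1)=1>\frac{3}{4}=L^{-b}\#S_1(8)$). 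The exact factor $L^{-b_i}$ comes from counting multiples of $L^{b_i}$ among \emph{all} integers in an interval, via $\lfloor Y/L^{b_i}\rfloor=\lfloor\lfloor Y\rfloor/L^{b_i}\rfloor$, and that is precisely what is destroyed once you force the $i$-th coordinate to be $b_i$-free in order to have unique representations. So ``granting the compatibility'' of the scaling with normal forms does not repair the argument: the failure is in the termwise bound itself, not in that compatibility, and you give no argument that the summed (over $k$) inequality survives the termwise failure. The paper's proof never passes to $S_1$ at this point: it freezes the other coordinates $r_j$, $j\ne i$ (summing over tuples with distinct products), so that in each slice $n_i$ runs over all of $\setN$ and both counts are exact floors, $\lfloor N^{1/\alpha_i}/(\kappa L^{b_i})\rfloor$ versus $\lfloor N^{1/\alpha_i}/\kappa\rfloor$, which compare with the factor $L^{-b_i}$ slice by slice. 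If you want to keep your scaling reduction, you still need a slicing argument of this type to compare $\#S$ at the two scales; the $S_1$/normal-form route with the exact constant $L^{-b_i}$ does not go through.
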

  \begin{proof}
    Fix $n_j=r_j$ for $j=1,2,\dots ,i-1,i+1,\dots,l.$ Then
    \begin{align*} &\#\{r_1^{\alpha_1}r_2^{\alpha_2}\dots
      r_{i-1}^{\alpha_{i-1}}
      (L^{b_i}n_i)^{\alpha_i}r_{i+1}^{\alpha_{i+1}}
                     \dots r_l^{\alpha_l}\leq N: n_i \in \setN\}\\
                   &=\Big\lfloor{\frac{N^\frac{1}{\alpha_i}}{\kappa
                     L^{b_i}}}\Big\rfloor \text{ where }
                     \kappa=\big(r_1^{\alpha_1}r_2^{\alpha_2}\dots
                     r_{i-1}^{\alpha_{i-1}}r_{i+1}^{\alpha_{i+1}}
                     \dots
                     r_l^{\alpha_l}\big)^\frac{1}{\alpha_i}\text{ and }\\
                   &\#\big\{r_1^{\alpha_1}r_2^{\alpha_2}\dots
                     r_{i-1}^{\alpha_{i-1}} (n_i)^{\alpha_i}
                     r_{i+1}^{\alpha_{i+1}}\dots r_l^{\alpha_l}\leq N:
                     n_i
                     \in \setN\big\}\\
                   &=\Big\lfloor{\frac{N^\frac{1}{\alpha_i}}{\kappa}}\Big\rfloor
    \end{align*}

    The proof follows by observing the following two
    equalities:
    \begin{align*}
        \#S{(N)}&=\#\big\{n_1^{\alpha_1}n_2^{\alpha_2}\dots
        n_l^{\alpha_l}\leq N: n_j \in \setN \text{ for }j\in[l] \big\}\\
        &= \sum\#\{r_1^{\alpha_1}r_2^{\alpha_2}\dots
        r_{i-1}^{\alpha_{i-1}} (n_i)^{\alpha_i}
        r_{i+1}^{\alpha_{i+1}}\dots r_l^{\alpha_l}\leq N: n_i
        \in \setN\}
        \intertext{and}
        \#D_i^L(N)&=\#\{n_1^{\alpha_1}n_2^{\alpha_2}\dots
        n_{i-1}^{\alpha_{i-1}} (L^{b_i}n_i)^{\alpha_i}
        n_{i+1}^{\alpha_{i+1}} \dots n_l^{\alpha_l}\leq N: n_j \in
        \setN \text{ for }j\in[l] \}\\ &=
        \sum\#\{r_1^{\alpha_1}r_2^{\alpha_2}\dots
        r_{i-1}^{\alpha_{i-1}} (L^{b_i}n_i)^{\alpha_i}
        r_{i+1}^{\alpha_{i+1}}\dots r_l^{\alpha_l}\leq N: n_i \in
        \setN\}
    \end{align*}
    where the summations are taken over all the $(l-1)$tuple
    $(r_1,r_2,\dots,r_{i-1},r_{i+1},\dots,r_l)$ such that their
    products
    $r_1^{\alpha_1}r_2^{\alpha_2}\dots r_{i-1}^{\alpha_{i-1}}
    r_{i+1}^{\alpha_{i+1}}$ are distinct.
  \end{proof}
  Hence the relative upper density of the set $C_{M}$ in $S$ is
  $\leq \displaystyle \sum_{i\in [l]}\sum_{L\geq e_K}\frac{1}{L^{b_i}}$
  which goes to $0$ as $K\to \infty.$

  Now we will show that each $S_k$ is linearly independent. It will be sufficient to show that $S_1$ is linearly
  independent.  After realizing every element $s_j$ of $S_1$ as
  $\prod\limits_{p\in \mathcal{P}} p^\frac{v_{j,p}}{b_1b_2\dots b_l}$,
  we will apply Lemma \ref{lem:2} on
  $S_1$ with $m=b_1b_2\dots b_l.$
  \begin{enumerate}[(i)]
  \item First let us show that for any given $j_0\in J$, and prime
    $p_0\in \mathcal{P}$, if $v_{j_0,p_0}\neq 0$, then
    $v_{j_0,p_0}\not\equiv 0$ (mod $m$). By definition of the set
    $S_1$, $v_{j_0,p_0}$ can be written as
    $v_{j_0,p_0}=c_1a_1\overline{b_2}+c_2a_2\overline{b_2}+\dots+c_la_l\overline{b_l}$
    ,where $0\leq c_i<b_i$, and $c_i$ depends on $s_{j_0,p_0}$ for
    $i\in[l]$. Here $\overline{b_i}$ is same as in  \eqref{eq:59}.\\ On
    the
    contrary, suppose $v_{j_0,p_0}\equiv 0$ (mod $m$). Hence we have\\
      \begin{align*}
        c_1a_1\overline{b_1}+c_2a_2\overline{b_2}+\dots+c_la_l\overline{b_l}&\equiv
        0 \text{ (mod }m).\intertext{This means}
        c_1a_1\overline{b_1}+c_2a_2\overline{b_2}+\dots+c_la_l\overline{b_l} &\equiv
        0 \text{ (mod }b_i) \text{ for all }i\in [l]. \intertext{But this implies}
        c_ia_i\overline{b_i}&\equiv 0 \text{ (mod }b_i) \text{ for all }i\in
        [l],\intertext{ implying that } c_i&=0 \text{ for all }i \in [l].\intertext{ So, we get }
        v_{j_0,p_0}&=0.
      \end{align*} This is a contradiction! Hence we conclude that if
    $v_{j_0,p_0}\neq 0$, then $v_{j_0,p_0}\not\equiv 0$ (mod $m$).
  \item Now, let us assume that $v_{j,p}\equiv v_{k,p}$ (mod $m)$, for
    all $p\in \mathcal{P}$.\\ Write
    $v_{j,p}=c_1a_1\overline{b_2}+c_2a_2\overline{b_2}+\dots+c_la_l\overline{b_l}$
    and
    $v_{k,p}=d_1a_1\overline{b_2}+d_2a_2\overline{b_2}+\dots+d_la_l\overline{b_l}$
    where $0\leq c_i,d_i<b$, and $c_i$ depends on $p,s_j$ and $d_i$
    depends on $p,s_k$ for all $i\in[l]$. By assumption, we have
\begin{align*}
c_1a_1\overline{b_2}+c_2a_2\overline{b_2}+\dots+c_la_l\overline{b_l}&\equiv d_1a_1\overline{b_2}+d_2a_2\overline{b_2}+\dots+d_la_l\overline{b_l}
        \text{ (mod }m)\intertext{ This implies that } (c_i-d_i)a_i\overline{b_i}&\equiv
        0 \text{ (mod }b_i) \text{ for all }i\in [l].\intertext{ Hence we get}
        (c_i-d_i)&=0 \text{ for all }i\in [l].\intertext{So, we conclude that}
        v_{j,p}=v_{k,p} \text{ for all } p\in \mathcal{P}.
      \end{align*} But we know that every element of $S_1$ has a
    unique representation. Hence, we get $j=k$.\\ Thus we conclude that $S$
    is a \emph{good set} and hence linearly independent over the rational.
  \end{enumerate} This completes the proof.
\end{proof}

 \section{The case when $\alpha$ is irrational: two open problems}
 \vspace{.3cm} 
 We have proved that the sequence $(n^\alpha)$ is
{strong sweeping out} when $\alpha$ is a \emph{non-integer rational
number}. For the case of irrational $\alpha$, it is known from \autocite{BBB} and \autocite{JW} that
$(n^\alpha)$ is {strong sweeping out} for all but countably many $\alpha$. However, the following questions are still open along this
line:
\begin{problem}[label={prob:1}]{}{}
 Find an explicit irrational $\alpha$, for which $(n^\alpha)$ is
strong sweeping out.
\end{problem}
\begin{problem}[label={prob:2}]{}{}
 Is it true that for all irrational $\alpha$, $(n^\alpha)$ is {pointwise bad} for $L^2$? If so, then is it true that $(n^\alpha)$ is {strong sweeping out}?
\end{problem}
 
 If one can find an explicit $\alpha$, for which $(n^\alpha)$ is linearly independent over the rational, then obviously it will answer Problem \ref{prob:1}. In fact, if we can find a suitable subsequence which is linearly independent over $\setQ$, then we can apply Proposition \ref{prop:1} to answer Problem \ref{prob:1}.\\ It might be possible to apply Theorem \ref{thm:4} and some density argument to handle Problem \ref{prob:2}.
 
 \section{Acknowledgements.}
The result is part of the author's thesis. He
is indebted to his advisor, Professor Máté Wierdl, for suggesting the problem and giving valuable inputs. He is supported by the National Science Foundation under grant
number DMS-1855745.

\printbibliography

\end{document}